\setlist{topsep=0mm,partopsep=0mm,itemsep=1mm}
\theoremstyle{plain}
\newtheorem{lemma}{Lemma}[section]
\newtheorem{thm}[lemma]{Theorem}
\newtheorem{cor}[lemma]{Corollary}
\newtheorem{prop}[lemma]{Proposition}
\newtheorem*{thm*}{Theorem}
\theoremstyle{definition}
\newtheorem{defn}[lemma]{Definition}
\theoremstyle{remark}
\newtheorem{rem}[lemma]{Remark}
\newcommand{\N}{\mathbb{N}}
\DeclareMathOperator{\supp}{supp}
\newcommand{\qqed}{\hfill\qed}
\newenvironment{thmenumerate}{\begin{enumerate}[label=\textup{(\roman*)},leftmargin=10mm]}{\end{enumerate}}
\newcommand{\typeitem}[1]{\item[#1]\protected@edef\@currentlabel{#1}}
\begin{document}

\title[Graph products of residually finite monoids]{Graph products of residually finite monoids are residually finite}

\author{Jung Won Cho}

\address{School of Mathematics and Statistics, University of St Andrews, St Andrews, Fife KY16 9SS, UK.}
\email{jwc21@st-andrews.ac.uk}

\author{Victoria Gould}
\address{Department of Mathematics, University of York, York YO10 5DD, UK}
\email{victoria.gould@york.ac.uk}

\author{Nik Ru\v{s}kuc}

\address{School of Mathematics and Statistics, University of St Andrews, St Andrews, Fife KY16 9SS, UK.}
\email{nik.ruskuc@st-andrews.ac.uk}

\author{Dandan Yang$^1$}

\address{School of Mathematics and Statistics, Xidian University, Xi'an 710071, P. R. China }
\email{ddyang@xidian.edu.cn}

\thanks{The second author is supported by the Engineering and Physical Sciences Research Council Grant EP/V002953/1. The third author is supported by the Engineering and Physical Sciences Research Council Grant EP/V003224/1.
The fourth author is supported by the National Natural Science Foundation of China Grant No. 12171380,  the Natural Science Basic Research Program of Shaanxi Province Grant No. 2023-JC-JQ-04, and  the Shaanxi Fundamental Science Research Project for Mathematics and Physics Grant No. 22JSQ034.}

\thanks{$^1$ The corresponding author.}

\keywords{Monoid, free products, graph products, residual finiteness}

\subjclass[2020]{20M10, 20M05}

\begin{abstract} We show that any  graph product of residually finite monoids is residually finite. As a special case we obtain that any free
product of residually finite monoids is residually finite. The corresponding results for graph products of semigroups follow. \end{abstract}

\maketitle
\section{Introduction}\label{sec:intro}

An
 algebra $A$ is {\em residually finite} if for any $a, b \in A$ with $a \neq b$ there is a finite algebra $B$ and a morphism $\theta \colon A \to B$ such that $a\theta \neq b\theta$.
Residual finiteness is a finitary property for algebras, in that clearly any finite algebra is residually finite. We are concerned here with residual finiteness for monoids, in particular, the preservation of that property under taking free products and, more generally, graph products.

The  free product of a set of disjoint monoids $\mathcal{M}=\{M_\alpha: \alpha\in V\}$ is the monoid freely generated by $\bigcup_{\alpha\in V}M_{\alpha}$.
It contains a copy of each $M_\alpha$, and these copies do not have any non-identity elements in common.
For a simple graph $\Gamma$ with set of vertices $V$ the  graph product of  $\mathcal{M}$ with respect to $\Gamma$ is the monoid freely generated by $\bigcup_{\alpha\in V}M_{\alpha}$, subject to the additional constraints that elements of $M_{\alpha}$ and $M_{\beta}$ commute  whenever there is an edge joining $\alpha$ and $\beta$ in $\Gamma$ \cite{hermiller:1995, dacosta:2001}.
Again,  the graph product contains a copy of each $M_\alpha$, and these copies do not have any non-identity elements in common. We usually refer to the monoids in $\mathcal{M}$ as {vertex monoids}. If the graph has no edges, then the graph product
construction returns  the free product. For finite complete graphs, the graph product simply yields the direct product. We give a formal definition in terms of presentations in Section~\ref{sec:prelim}.

For general reasons, it is true that the direct product $A \times  B$ of two residually finite algebras is residually finite; in fact the residually finite algebras of the same type form a prevariety, the point of view adopted in \cite{sapir:1982}. 
The converse is not true in general, but is for monoids, and indeed whenever $A$ and $B$ are contained in $A\times B$;
the converse is also true for semigroups for somewhat non-obvious reasons \cite{gray:2009}.
The situation for residual finiteness of free products is exactly reverse: if the free product $A\ast B$ is residually finite, then so are $A$ and $B$, because they are contained in $A\ast B$. The converse is not true in general, is true for monoids, but we have not been able to find it in literature. It is a special case of our main theorem in this paper.
Green \cite{Green:1990} showed that graph products of groups are residually finite if and only if each vertex group is residually finite; the proof relies on group-theoretic arguments, not available for arbitrary monoids.

The thrust  of this paper is to prove the following result (see Theorem~\ref{thm:main}).

\begin{thm*}
Any graph product of monoids is residually finite if and only if each vertex monoid is residually finite.\end{thm*}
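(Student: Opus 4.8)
The plan is the following. One implication is trivial: the graph product $P$ of $\mathcal M$ with respect to $\Gamma$ contains a copy of each vertex monoid $M_\alpha$, so if $P$ is residually finite then so is every $M_\alpha$. For the converse, assume each $M_\alpha$ is residually finite, fix $u\neq v$ in $P$, and seek a homomorphism from $P$ onto a finite monoid separating them. We shall use freely the normal form theory for graph products of monoids \cite{hermiller:1995, dacosta:2001}: every element of $P$ is represented by a \emph{reduced word} $p_1\cdots p_n$, with $p_i$ a non-identity element of some $M_{\alpha_i}$, unique up to repeatedly interchanging adjacent syllables at adjacent vertices; moreover whether such a word is reduced, and which rearrangements of it are admissible, depend only on the vertex sequence $\alpha_1,\dots,\alpha_n$ together with $\Gamma$ --- call this the \emph{shape} of the word.

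The first step is to reduce to finitely many finite vertex monoids over a finite graph. Only finitely many vertices $W$, and for each $\alpha\in W$ only finitely many elements $E_\alpha\subseteq M_\alpha$ (to which we adjoin $1$), occur among the syllables of the reduced words of $u$ and $v$. Using residual finiteness of $M_\alpha$, together with the fact that a finite intersection of finite-index congruences has finite index, choose homomorphisms $\pi_\alpha\colon M_\alpha\to F_\alpha$ onto finite monoids, injective on $E_\alpha$, for $\alpha\in W$, and let $F_\alpha$ be trivial otherwise. These induce a homomorphism $\Pi$ of $P$ onto the graph product $P_0$ of the $F_\alpha$ ($\alpha\in W$) over the induced subgraph of $\Gamma$ on $W$ (all omitted vertex monoids being collapsed), and $\Pi$ respects all defining relations. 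Since the $\pi_\alpha$ separate the relevant syllables --- in particular kill no relevant non-identity syllable --- and the shape of a reduced word is unchanged, $\Pi$ carries the reduced words of $u$ and $v$ to reduced words of $P_0$ of the same shapes with the $\pi$-images of the original syllables; hence $\Pi(u)\neq\Pi(v)$. It therefore suffices to prove that a graph product of finitely many finite monoids over a finite graph is residually finite.

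So, renaming, let $P$ be a graph product of finitely many finite monoids over a finite graph. I would use the standard fact that a monoid $P$ is residually finite if and only if any two distinct elements can be separated by a finite right $P$-set --- equivalently, by a finite-index right congruence --- and I would build such a $P$-set as a finite model of (part of) the right regular representation. The key structure is that, for each vertex $\alpha$, the set of reduced words of $P$ splits into $M_\alpha$-orbits, the \emph{$\alpha$-fibres}, each finite and each a quotient of the right regular $M_\alpha$-set: right multiplying a reduced word by an element of $M_\alpha$ only alters its maximal suffix lying in the closed neighbourhood of $\alpha$, and within that suffix touches at most one $\alpha$-syllable. Let $D$ be the finite set of all prefixes of the reduced words of $u$ and $v$; the finitely many $\alpha$-fibres meeting $D$ (over all vertices $\alpha$) form a finite set $T_0$ of reduced words that contains $\epsilon$, $u$ and $v$, carries a partially defined $P$-action, and on which reading $u$ or $v$ from the point $\epsilon$ never leaves $T_0$ and returns $u$, respectively $v$ (at each step the fibre used contains the prefix reached so far, so it lies in $T_0$). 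The remaining task --- the heart of the matter --- is to complete $T_0$ to a finite \emph{total} $P$-set without identifying $u$ with $v$.

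This completion is the key technical point, and where I expect the real work to lie. The crude device of sending everything outside $T_0$ to a single absorbing state fails as soon as $P$ admits cancellation, i.e.\ as soon as some $M_\alpha$ has nontrivial elements $a,b$ with $ab=1$: the identity $q\cdot(ab)=q$ then breaks for a state $q$ that has been routed to the sink. So even for free products of finite monoids one needs a genuine completion --- a monoid-theoretic counterpart of Stallings' graph-folding (equivalently, of building the Schreier coset graph of a finite-index subgroup of a free group) --- in which the part outside $T_0$ is folded back onto finitely many further copies of right regular vertex-monoid-sets rather than discarded. Carrying this out for monoids is delicate, since the generators act by non-invertible transformations and partial actions of monoids do not globalise as freely as those of groups; and for genuine graph products the folding must in addition respect the commutation relations along edges, which cannot be arranged merely by enlarging $T_0$, because closing the chosen fibres up under the moves occurring in the commutation squares forces closure under right multiplication by the link subgraph product $P[\operatorname{link}(\alpha)]$, which is typically infinite. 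I would expect the paper to resolve this by an explicit construction of the completed finite $P$-set, quite possibly organised as an induction on the number of vertices through the decomposition of $P$ as an amalgamated free product $P_1\ast_{P_L}(P_L\times M_\alpha)$ over the retract $P_L=P[\operatorname{link}(\alpha)]$, with the free product case as the base.
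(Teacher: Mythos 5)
Your first implication and your reduction step are sound and essentially coincide with the paper's: you use residual finiteness of each $M_\alpha$ to get morphisms onto finite monoids that are injective on the finitely many syllables (plus identities) occurring in the two reduced words, observe that the induced morphism of graph products preserves reducedness and shuffle type so the images stay distinct, and then retract onto the induced subgraph on the finitely many relevant vertices. The genuine gap is in the finite case, which you yourself label the heart of the matter but do not prove: you build a finite set $T_0$ of prefixes and fibres carrying only a \emph{partial} action, and then defer the completion to a total finite $P$-act to an unspecified folding construction, or to a speculative induction on vertices via an amalgamated decomposition over the link. Nothing in your text carries out that completion, so the claim that a graph product of finite monoids over a finite graph is residually finite is left unestablished, and with it the theorem.

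It is worth recording how the paper sidesteps, rather than solves, the completion problem you describe. Instead of truncating by a prefix set, it truncates by \emph{block length}: for $k$ at least the block lengths of the two elements to be separated, each vertex monoid $M_\alpha$ acts on the finite set $F_k=\{[w]\in\mathscr{GP}\colon \ell(w)\le k\}$ by $[w]*x=[w\circ x]$ when $\ell(w\circ x)\le k$, and by $[w]*x=[w]$ (stay put, not a sink) when $\ell(w\circ x)=k+1$. Your cancellation objection (that $q\cdot(ab)=q$ breaks at an absorbing state) does not apply to this rule, because of a prior block-length fact: whether right multiplication by a non-identity element of $M_\alpha$ pushes the block length past $k$ depends only on the vertex $\alpha$, not on the element (Corollary~\ref{key6}); this is what makes $*$ a genuine action of each $M_\alpha$ (Lemma~\ref{lem:action}), and a direct case analysis (Lemma~\ref{lem:thecrux}) shows the edge relations are respected, yielding a morphism $\mathscr{GP}\to\mathcal{T}_{F_k}$. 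Evaluating at $[\epsilon]$ returns the two elements themselves, hence separates them. So the missing idea in your outline is precisely this block-length truncation (together with the lemmas controlling how block length changes under multiplication by a single letter); without it, or a fully worked-out substitute for your proposed folding/induction, the argument remains incomplete.
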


We present a number of corollaries, including the corresponding result for semigroups.

The structure of this paper is as follows. In Section~\ref{sec:prelim} we set out the background notions needed for this article, including the notion of left Foata normal form for elements of graph products. Section~\ref{sec:blocklength} introduces the notion of block length of words and elements of graph products, and examines the  behaviour of block length under composition.
In Section~\ref{sec:finite} we turn our attention to a special case of our main theorem and show that graph products of finite monoids are residually finite. We then are able to complete our main theorem, and the desired corollaries, in Section~\ref{sec:main}.

\section{Preliminaries}
\label{sec:prelim}

To keep this article as self-contained as possible, here we run through the main concepts required. For more details, we recommend \cite{cliffordpreston:1967,howie:1995}.

\subsection{Monoid presentations}\label{sub:pres}  Let $X$ be a set. The  {\em free monoid} $X^*$ on $X$ consists of all words over $X$ with operation of juxtaposition.
We denote a  non-empty word by $x_1\circ \dots \circ x_n$ where $x_i\in X$ for $1\leq i\leq n$; here $n$ is the {\em length} of the word. We  also
 use $\circ$ for juxtaposition of words. The empty word is denoted by $\epsilon$ and is the identity of $X^*$. Throughout, our convention is that if we say $x_1\circ\dots \circ x_n\in X^*$, then we mean that $x_i\in X$ for all $1\leq i\leq n$, unless we explicitly say otherwise.

A {\em monoid presentation} $\langle X\mid R\rangle$, where $X$ is a set and $R\subseteq  X^*\times X^*$, determines the monoid $X^*/R^{\sharp}$, where $R^{\sharp}$ is the congruence on $X^*$ generated by $R$. In the usual way, we identify $(u,v)\in R$ with the formal equality $u=v$ in a presentation $\langle X\mid R\rangle$. We denote elements of the quotient by $[w]$, where $w\in X^*$.

\subsection{Graph products of monoids}  \label{sub:gp}
In this article, all graphs are simple, that is, they are undirected with no multiple edges or loops.
   Let $\Gamma=(V,E)$ be a  graph;
here $V$ is the  non-empty set of {\em vertices} and  $E$ is  the set of {\em edges} of  $ \Gamma$. We denote an edge joining vertices $\alpha$ and $\beta$ by
$(\alpha,\beta)$ or (since our graph is undirected) by $(\beta,\alpha)$.

Let $\mathcal{M} = \{M_{\alpha} \colon \alpha\in V\}$ be a set of monoids. The {\em direct product} $\prod_{\alpha\in V} M_{\alpha}$ of $\mathcal{M}$ is the monoid where the underlying set consists of all functions $f \colon V \to \bigcup_{\alpha \in V} M_{\alpha}$ such that $\alpha f \in M_{\alpha}$ for all $\alpha \in V$ and the multiplication is defined to be $fg\colon V \to \bigcup_{\alpha\in V} M_{\alpha}$ such that $\alpha (fg) = (\alpha f)(\alpha g)$ for all $\alpha \in V$. The {\em restricted direct product} of $\mathcal{M}$ is the submonoid of $\prod_{\alpha\in V}M_{\alpha}$ consisting of those elements $f \colon V \to \bigcup_{\alpha \in V}M_{\alpha}$ such that $\vert \{\alpha \in V \colon \alpha f \neq 1_{\alpha} \} \vert < \infty$. We note that if $\vert V \vert < \infty$, then the two products coincide.
The definition of the  free product of the monoids in the set $\mathcal{M} = \{M_{\alpha} \colon \alpha\in V\}$, and its universal properties,   may be found in  \cite{cliffordpreston:1967} and \cite{howie:1995}. We do not give details here since free products are  immediately obtained as a special case of the following.

 \begin{defn}[\cite{Green:1990,dacosta:2001}]\label{defn:graphprodmonoids}  Let
$\Gamma=(V,E)$ be  graph and let $\mathcal{M}=\{M_\alpha: \alpha\in V\}$ be a set of mutually disjoint monoids.  For each $\alpha\in V$, we write $1_{\alpha}$ for the identity of $M_{\alpha}$ and put $I=\{ 1_{\alpha}: \alpha\in V\}$.
The {\em graph product} $\mathscr{GP}=\mathscr{GP}(\Gamma,\mathcal{M})$ of
 $\mathcal{M}$   with respect to $\Gamma$ is defined by the monoid presentation
 \[\mathscr{GP}=\langle X\mid R  \rangle\]
 where $X=\bigcup_{\alpha\in V}M_\alpha$ and the relations in $R= R_{\textsf{id}}\cup R_{\textsf{v}}\cup R_{\textsf{e}}$ are given by:
\[\begin{array}{rcl}R_{\textsf{id}}&=&\{   1_\alpha=\epsilon:  \alpha\in V\},\\

R_{\textsf{v}}&=&\{   x\circ y=xy:  \ x,y\in M_\alpha,\alpha\in V\},\\

R_{\textsf{e}}&=& \{ x\circ y=y\circ x: x\in M_\alpha,\, y\in M_\beta, \, (\alpha,\beta)\in E)\}.\end{array}\]
\end{defn}

The monoids $M_{\alpha}$ in Definition~\ref{defn:graphprodmonoids} are  referred to as \textit{vertex monoids}. We may also say for brevity that $\mathscr{GP}(\Gamma,\mathcal{M})$ is the {\em  graph product of the monoids $M_{\alpha},\, \alpha\in V$}. We denote the $R^{\sharp}$-class of $u\in X^*$ by
$[u]$.

\begin{rem}\label{rem:freeprod} Let $\mathcal{M}=\{M_\alpha: \alpha\in V\}$ be a set of mutually disjoint monoids.
\begin{thmenumerate}
\item[$\bullet$]  If $E=\emptyset$, then the  graph product $\mathscr{GP}(\Gamma,\mathcal{M})$ is the  free product of the monoids in $\mathcal{M}$.

\item[$\bullet$] If $E=V\times V$, then the graph product $\mathscr{GP}(\Gamma,\mathcal{M})$  is isomorphic to the  restricted direct product of the monoids in $\mathcal{M}$, and so to the direct product in the case where $|V|<\infty$.

\item[$\bullet$] If the monoids in $\mathcal{M}$ are all free monoids, then $\mathscr{GP}$ is a {\em trace monoid} (see, for example, \cite{diekert:1990}).

\item[$\bullet$] If the monoids in $\mathcal{M}$ are all free groups, then $\mathscr{GP}$ is a {\em free partially commutative group},
or, in the case where $\Gamma$ is finite and each vertex group is finitely generated, {\em a right angled Artin group} (see, for example, \cite{duncan:2020}).
\end{thmenumerate}
\end{rem}

For the remainder of this section  we consider the monoid $\mathscr{GP}=\mathscr{GP}(\Gamma,\mathcal{M})$, and give some of its properties that will be needed in the proof of our main result. We begin with a couple of universal properties.

\begin{lemma}$($\cite[Proposition 1.7]{fountain:2009}$)$ \label{la:morph}
Suppose $\{N_\alpha: \alpha\in V\}$ is another set of mutually disjoint monoids, and  $\phi_\alpha: M_\alpha\rightarrow N_\alpha$, $\alpha\in V$,
are morphisms. Then there is a unique morphism $\phi: \mathscr{GP}(\Gamma, \mathcal{M})\rightarrow \mathscr{GP}(\Gamma, \mathcal{N})$ such that $m_\alpha\phi=m_\alpha\phi_\alpha$ for all $\alpha\in V$ and $m_\alpha\in M_\alpha$. \qqed
\end{lemma}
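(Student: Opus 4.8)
The plan is to work directly from the presentation $\mathscr{GP}(\Gamma,\mathcal{M})=\langle X\mid R\rangle$ of Definition~\ref{defn:graphprodmonoids}: to build a morphism out of a monoid defined by generators and relations, one specifies the images of the generators and checks that every defining relation is respected. Put $Y=\bigcup_{\alpha\in V}N_\alpha$, so that $\mathscr{GP}(\Gamma,\mathcal{N})=\langle Y\mid S\rangle$ with $S=S_{\textsf{id}}\cup S_{\textsf{v}}\cup S_{\textsf{e}}$ the relations of Definition~\ref{defn:graphprodmonoids} written for $\mathcal{N}$. Since the monoids in $\mathcal{M}$ are mutually disjoint, $X=\bigcup_{\alpha\in V}M_\alpha$ is a disjoint union, so each generator $m\in X$ lies in a unique $M_\alpha$; I would define $\psi\colon X\to\mathscr{GP}(\Gamma,\mathcal{N})$ by $m\psi=[m\phi_\alpha]$ for $m\in M_\alpha$, and extend it, via the universal property of the free monoid, to a morphism $\psi^{*}\colon X^{*}\to\mathscr{GP}(\Gamma,\mathcal{N})$.

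The substance of the argument --- routine, but the part deserving attention --- is to verify that $\psi^{*}$ identifies the two sides of each relation in $R$, and hence of each pair in the congruence $R^{\sharp}$. For $R_{\textsf{id}}$: as $\phi_\alpha$ is a monoid morphism, $1_\alpha\phi_\alpha$ is the identity of $N_\alpha$, which $S_{\textsf{id}}$ equates with $\epsilon$, so $1_\alpha\psi=[\epsilon]$. For $R_{\textsf{v}}$ with $x,y\in M_\alpha$: using that $\phi_\alpha$ is a morphism together with the instance of $S_{\textsf{v}}$ joining $(x\phi_\alpha)\circ(y\phi_\alpha)$ to $(x\phi_\alpha)(y\phi_\alpha)$, we get $(x\circ y)\psi^{*}=[x\phi_\alpha][y\phi_\alpha]=[(x\phi_\alpha)\circ(y\phi_\alpha)]=[(x\phi_\alpha)(y\phi_\alpha)]=[(xy)\phi_\alpha]=(xy)\psi$. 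For $R_{\textsf{e}}$ with $x\in M_\alpha$, $y\in M_\beta$, $(\alpha,\beta)\in E$: here $x\phi_\alpha\in N_\alpha$, $y\phi_\beta\in N_\beta$ and $(\alpha,\beta)$ is still an edge of $\Gamma$, so the corresponding instance of $S_{\textsf{e}}$ gives $(x\circ y)\psi^{*}=[x\phi_\alpha][y\phi_\beta]=[y\phi_\beta][x\phi_\alpha]=(y\circ x)\psi^{*}$. Therefore $\psi^{*}$ factors through the quotient map $X^{*}\to X^{*}/R^{\sharp}=\mathscr{GP}(\Gamma,\mathcal{M})$, yielding a morphism $\phi\colon\mathscr{GP}(\Gamma,\mathcal{M})\to\mathscr{GP}(\Gamma,\mathcal{N})$; under the usual identification of each $M_\alpha$ (resp.\ $N_\alpha$) with its copy inside the graph product, $\phi$ satisfies $m_\alpha\phi=m_\alpha\phi_\alpha$ for all $\alpha\in V$ and $m_\alpha\in M_\alpha$.

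Uniqueness is then immediate: the classes of the generators $m\in X$ generate $\mathscr{GP}(\Gamma,\mathcal{M})$, so a morphism out of $\mathscr{GP}(\Gamma,\mathcal{M})$ is determined by its restriction to $\bigcup_{\alpha\in V}M_\alpha$, and hence $\phi$ is the only morphism with the stated property. I do not anticipate a genuine obstacle --- the result is a formal consequence of the presentation, essentially the functoriality of the graph product construction --- and the one point calling for a little care is the $R_{\textsf{id}}$ verification, where one must use that each $\phi_\alpha$ is a morphism \emph{of monoids} (so preserves identities), together with the disjointness of $\mathcal{M}$ and of $\mathcal{N}$, which guarantees that the assignment $m\mapsto m\psi$ and the target presentation $\langle Y\mid S\rangle$ are unambiguous.
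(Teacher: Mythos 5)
Your argument is correct and is exactly the standard one: the paper itself gives no proof, simply citing Fountain--Kambites, and the cited proof is the same presentation-based verification (define the map on generators, check that $R_{\textsf{id}}$, $R_{\textsf{v}}$, $R_{\textsf{e}}$ are respected, then invoke uniqueness from the fact that $\bigcup_{\alpha\in V}M_\alpha$ generates the graph product). No gaps.
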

 
\begin{lemma}$($\cite[Proposition 2.3]{dandan:2023}$)$\label{lem:thedeletiontrick} 
Let $V'\subseteq V$, let $\Gamma'=(V',E')$ be the induced subgraph of  $\Gamma$ on $V'$, and let $\mathcal{M}'=\{ M_\alpha:\alpha\in V'\}$.  Then $\mathscr{GP}(\Gamma',\mathcal{M}')$ is a retract of $\mathscr{GP}(\Gamma,\mathcal{M})$. \qqed
\end{lemma}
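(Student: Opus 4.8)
The plan is to construct explicitly a retraction $\mathscr{GP}(\Gamma,\mathcal{M}) \to \mathscr{GP}(\Gamma',\mathcal{M}')$, that is, a morphism $\rho$ together with the inclusion morphism $\iota$ in the other direction such that $\iota\rho = \mathrm{id}_{\mathscr{GP}(\Gamma',\mathcal{M}')}$. First I would produce the inclusion $\iota \colon \mathscr{GP}(\Gamma',\mathcal{M}') \to \mathscr{GP}(\Gamma,\mathcal{M})$: since $V' \subseteq V$ and $\Gamma'$ is the induced subgraph, every vertex monoid $M_\alpha$ ($\alpha \in V'$) is among the vertex monoids of $\mathscr{GP}(\Gamma,\mathcal{M})$, and every defining relation of $\mathscr{GP}(\Gamma',\mathcal{M}')$ from $R_{\textsf{id}}$, $R_{\textsf{v}}$, $R_{\textsf{e}}$ is a defining relation of $\mathscr{GP}(\Gamma,\mathcal{M})$ (here the fact that $\Gamma'$ is \emph{induced}, not merely a subgraph, is what makes the edge relations match up exactly). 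Hence the identity maps $M_\alpha \to M_\alpha$, $\alpha \in V'$, induce by Lemma~\ref{la:morph} a morphism $\iota$; one checks it is injective on each vertex monoid, but in fact injectivity of $\iota$ will follow for free once we exhibit a one-sided inverse.

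Next I would build $\rho \colon \mathscr{GP}(\Gamma,\mathcal{M}) \to \mathscr{GP}(\Gamma',\mathcal{M}')$ by specifying it on vertex monoids and invoking Lemma~\ref{la:morph} again. For $\alpha \in V'$ let $\rho_\alpha \colon M_\alpha \to M_\alpha$ be the identity; for $\alpha \in V \setminus V'$ let $\rho_\alpha \colon M_\alpha \to M_\alpha$ be the trivial morphism sending everything to $1_\alpha$ (equivalently, compose with the inclusion of the trivial submonoid $\{1_\alpha\}$, which maps into $\mathscr{GP}(\Gamma',\mathcal{M}')$ as the identity). To apply Lemma~\ref{la:morph} in this asymmetric situation, one regards $\mathscr{GP}(\Gamma',\mathcal{M}')$ as $\mathscr{GP}(\Gamma,\mathcal{N})$ where $N_\alpha = M_\alpha$ for $\alpha \in V'$ and $N_\alpha$ is the trivial monoid for $\alpha \in V\setminus V'$: killing the trivial vertex monoids collapses those generators, and the presentation of $\mathscr{GP}(\Gamma,\mathcal{N})$ reduces (the relations in $R_{\textsf{e}}$ involving a trivial vertex become redundant) precisely to that of $\mathscr{GP}(\Gamma',\mathcal{M}')$. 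This identification gives the desired $\rho$.

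Finally I would verify $\iota\rho = \mathrm{id}$. Both sides are morphisms on $\mathscr{GP}(\Gamma',\mathcal{M}')$, so by the uniqueness clause of Lemma~\ref{la:morph} it suffices to check they agree on each generator, i.e.\ on each $m_\alpha \in M_\alpha$ for $\alpha \in V'$; there $m_\alpha \iota = m_\alpha$ (viewed in the big graph product) and then $m_\alpha \iota \rho = m_\alpha \rho_\alpha = m_\alpha$, as required. This shows $\mathscr{GP}(\Gamma',\mathcal{M}')$ is a retract, and incidentally that $\iota$ is injective.

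The only genuinely delicate point is the identification of $\mathscr{GP}(\Gamma,\mathcal{N})$ with $\mathscr{GP}(\Gamma',\mathcal{M}')$ when the $N_\alpha$ for $\alpha \notin V'$ are trivial: one must argue at the level of presentations that setting each such generator equal to $\epsilon$ leaves exactly the presentation of the smaller graph product, using that $\Gamma'$ is the \emph{induced} subgraph so that no relation of $\mathscr{GP}(\Gamma',\mathcal{M}')$ is lost and no spurious relation is gained. I expect this bookkeeping with presentations, rather than any conceptual difficulty, to be the main thing to get right; everything else is a routine application of the universal property in Lemma~\ref{la:morph}.
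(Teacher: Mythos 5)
Your argument is correct; the paper does not actually prove this lemma but quotes it from \cite{dandan:2023}, and your construction --- include $\mathscr{GP}(\Gamma',\mathcal{M}')$ via the presentation, then kill the vertex monoids outside $V'$ and identify the graph product over $\Gamma$ with trivial vertices off $V'$ with $\mathscr{GP}(\Gamma',\mathcal{M}')$ --- is exactly the standard retraction argument used in that source. The only cosmetic point is that Lemma~\ref{la:morph} as stated compares graph products over the \emph{same} graph, so the existence of $\iota$ should be justified directly from the presentations rather than by that lemma, which is in fact what your first paragraph already does.
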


\nopagebreak
\subsection{Reduced words}\label{sub:reduced}

\begin{defn}\label{defn:shuffle} We refer to an application of a relation in $R_{\textsf{e}}$ to a word $w\in X^*$ as a {\em shuffle}.
Two words in $X^*$ are {\it shuffle equivalent} if one can be obtained from the other by shuffles.
\end{defn}

\begin{defn}\label{def of reduced} We refer to an application of a relation in $R$ to a word $w\in X^*$ that reduces 
the length of $w$ as a {\em reduction}.
A word $w\in X^*$ is {\em reduced} if it is not possible to apply a reduction step to any word shuffle equivalent to $w$.
\end{defn}

Note that a reduction is either an application of a relation from $R_{\textsf{id}}$ replacing $1_\alpha$ by $\epsilon$, or an application of a relation from $R_\textsf{v}$ replacing $x\circ y$ by $xy$.

The following is key. It appears for groups in \cite{Green:1990}; several subsequent papers have noted the result for monoids e.g. \cite{dacosta:2001,fountain:2009}.
\begin{thm}$($\cite[Theorem 5.5]{dacosta:2001}$)$\label{thm:conf} Every element of $X^*$ is equivalent to a reduced word. If $w,v$ are reduced words, then $w$ is equivalent to $v$ if and only if $w$ and $v$ are shuffle equivalent. \qqed 
\end{thm}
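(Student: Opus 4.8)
My plan is to read this as a termination-plus-confluence statement about the rewriting system on $X^*$ whose elementary steps are shuffles and reductions. The first observation is that every relation of $R$, applied in the length-non-increasing direction, is either a shuffle (an instance of $R_{\textsf{e}}$, which preserves length) or a reduction (an instance of $R_{\textsf{id}}$ or $R_{\textsf{v}}$, which strictly decreases length), while in the reverse direction it is a shuffle or the inverse of a reduction. Hence $R^\sharp$ is precisely the equivalence relation on $X^*$ generated by shuffle equivalence $\sim$ together with the one-step reduction relation $\to$. Existence of a reduced word in each class is then immediate: starting from any $w$, repeatedly perform a sequence of shuffles followed by a reduction while some word shuffle equivalent to the current one admits a reduction; each round strictly decreases the length, so the process halts after at most $|w|$ rounds, and the word it halts at is by definition reduced and $R^\sharp$-equivalent to $w$.

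For the uniqueness half I would pass to the quotient $X^*/\!\sim$, taking shuffle classes as objects; a reduction then induces a well-defined step $\Rightarrow$ between shuffle classes (shuffle a representative so the redex is exposed, then reduce). I would record that $\sim$ is exactly trace equivalence for the independence relation ``$x\in M_\alpha$ and $y\in M_\beta$ with $\alpha\ne\beta$ and $(\alpha,\beta)\in E$'' — symmetric, and irreflexive because the $M_\alpha$ are mutually disjoint — so the standard combinatorics of traces is available: projections onto the vertex monoids, Levi's Lemma, and the characterisation of when one letter of a word can be moved next to another by shuffling. The step $\Rightarrow$ is terminating since word length is $\sim$-invariant and strictly drops at each step, so by Newman's Lemma (local confluence plus termination gives confluence) it suffices to prove that $\Rightarrow$ is locally confluent; confluence then yields unique normal forms, and since the $\Rightarrow$-normal forms are exactly the shuffle classes of reduced words, this gives the uniqueness claim. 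The converse direction of the ``if and only if'' needs nothing, as shuffles are instances of $R_{\textsf{e}}$.

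Local confluence is the core. Given a shuffle class with two outgoing $\Rightarrow$-steps, I would lift to a word $u$ carrying two reduction redexes — each either a deletion of an occurrence of some $1_\gamma$, or a merge of two letters $x,y\in M_\alpha$ that some shuffle of $u$ places adjacent — possibly exposed by different shuffles of $u$. When the letters involved in the two redexes are disjoint, the trace characterisation of adjacency shows that neither reduction destroys the data the other needs, so the two steps can be carried out consecutively in either order to a common result up to shuffle. When they share letters the cases are finite and routine: two merges sharing a letter amount to three letters $x,y,z\in M_\alpha$ that can be brought pairwise adjacent, and merging in either order yields $xyz$ by associativity in $M_\alpha$; a merge whose pair contains $1_\alpha$ versus the deletion of that same $1_\alpha$ both produce the other letter of the pair; two deletions either coincide or fall under the disjoint case. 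Each case exhibits a common $\Rightarrow$-reduct, establishing local confluence and hence the theorem.

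The one genuinely delicate point — and the place the trace-theoretic input really earns its keep — is the disjoint-redex case: since a merge redex need not be visible in $u$ itself but only after a shuffle (the two letters commuting with everything separating them), one must check that performing the other reduction leaves this ``bringable adjacent'' property intact and that the auxiliary shuffles can be re-chosen coherently; this is exactly what Levi's Lemma and the inheritance of independence by subwords deliver. Everything else is the finite overlap bookkeeping sketched above. (One could sidestep Newman's Lemma and instead induct on the length of a $(\sim\cup\to\cup\leftarrow)$-derivation between two reduced words, reducing a general zig-zag to a peak; the critical-pair analysis is the same.)
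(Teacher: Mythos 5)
The paper itself contains no proof of Theorem~\ref{thm:conf}: it is quoted verbatim from \cite[Theorem 5.5]{dacosta:2001}, so there is no internal argument to measure you against. Your route --- generate $R^\sharp$ by shuffles together with length-decreasing reductions, pass to shuffle classes, get termination from length, and reduce uniqueness to local confluence via Newman's Lemma --- is sound, and it is essentially the standard confluence/normal-form argument by which this result is established in the trace-monoid and graph-product literature (da Costa's proof is of this type; Green's thesis treats the group case by group-specific means). The point you flag as delicate is indeed the crux, and it does close. Work with the dependence order of a shuffle class: two occurrences $a,b$ of letters of the same vertex monoid $M_\alpha$ form a merge redex exactly when no occurrence lies strictly between them in this order, and covering pairs in this order are always dependent. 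With that, if $a<b$ (in $M_\alpha$) and $c<d$ (in $M_\beta$) are disjoint mergeable pairs, the merged letter $ab$ could obstruct the merge of $c,d$ only if ($c<a$ or $c<b$) and ($a<d$ or $b<d$); splitting on whether $\alpha=\beta$, $(\alpha,\beta)\notin E$, or $(\alpha,\beta)\in E$ (in the last case using the covering fact to replace $c<b$ by $c<g<b$ with $g$ dependent on $M_\alpha$, whence $g<a$ and so $c<a$), every branch produces a letter strictly between $a$ and $b$ or between $c$ and $d$, a contradiction; deletions only remove order relations and so never destroy a redex, and the genuinely overlapping cases reduce, as you say, to associativity in $M_\alpha$ and to absorption of $1_\alpha$. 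So nothing in your outline fails; what separates it from a complete proof is that the dependence-order formalism (or an equivalent projection/Levi-style argument) must be set up explicitly, because ``mergeable after a shuffle'' is a property of the shuffle class rather than of the chosen representative, and both the well-definedness of your step $\Rightarrow$ and the disjoint-redex analysis rest on it.
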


\begin{defn} If $u,w\in X^*$  with $[u]=[w]$ and such that $w$ is reduced, then we say that $w$ is a {\em reduced form} of $u$.\end{defn}

It follows from Theorem~\ref{thm:conf} that all reduced forms of $u\in X^*$ are shuffle equivalent and hence have the same length as words in $X^*$. We now proceed to show that amongst the reduced forms of $u$ we can pick words with useful properties.

\subsection{Foata normal form}

\begin{defn}\label{defn:support}
Let $w=x_1\circ \dots \circ x_n\in X^*$ with $x_i\in M_{\alpha_i}$ for all $1\leq i\leq n$. We call the
set $\{\alpha_1, \dots, \alpha_n\}$ the  {\it support} of $w$ and denote it by $\supp(w)$. 
If $\supp(w)$ is a singleton, i.e. $\supp(w) = \{\alpha\}$, then for the sake of notational convenience we may identify $\supp(w)$ with the vertex $\alpha$.
\end{defn}

\begin{defn}
A reduced word $w\in X^*$ is called a {\it complete block} if the subgraph of $\Gamma$ induced by $\supp(w)$ is complete.
\end{defn}

\begin{rem}
A word $w\in X^*$ is a complete block if and only if $\supp(w)$ is a complete subgraph, no letters in $w$ are from $I$ and for each $\alpha\in \supp(w)$ there exists a unique letter contained in $w$ with support $\alpha$.
\end{rem}

\begin{defn}\label{defn:lcrf} A word  $w\in X^*$ is a  {\it left Foata normal form}
with {\em blocks} $w_i\in X^*$, $1\leq i\leq k$, if the following three conditions are satisfied:
\begin{thmenumerate}
\item\label{it:lcrf1}
$w=w_1\circ \dots \circ w_k$ is a reduced word;
\item\label{it:lcrf2}
$w_i$ is a complete block for all $1\leq i\leq k$;
\item\label{it:lcrf3}
for any $1\leq i<k$ and  $\alpha\in \supp(w_{i+1})$, there is some $\beta\in \supp(w_i)$ such that $(\alpha, \beta)\not\in E$.
\end{thmenumerate}
If $[u]=[w]$ where $w$ is a left  Foata normal form, then we may say
$w$ is a  {\em  left  Foata normal form of $u$}.
Sometimes we will abbreviate the term left  Foata normal form to \emph{LFNF}.
\end{defn}

Note that the empty word $\epsilon$ is a  LFNF by definition.

\begin{thm}[{\cite{dandan:2023}}] \label{thm:uniqueness} Every  element $w$ of $X^*$ has a LFNF  $w_1\circ w_2\circ\dots\circ w_k$,
with blocks $w_i$ for $1\leq i\leq k$.

If  $w_1'\circ w_2'\circ\dots \circ w_h'$ is any LFNF of $w$ with blocks $w_j'$ for $1\leq j\leq h$, then $k=h$ and $[w_i]=[w_i']$
for $1\leq i\leq k$. \qqed
\end{thm}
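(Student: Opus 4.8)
The statement has two halves: existence of a left Foata normal form (LFNF), and uniqueness of the block decomposition up to $R^\sharp$-equivalence of each block. Let me sketch a plan for each.

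For **existence**, the plan is to start from any reduced word $w$ representing the given element (such a word exists by Theorem~\ref{thm:conf}) and extract blocks greedily from the left. Concretely, I would define $w_1$ to consist of a maximal set of letters of $w$ that can be "shuffled to the front": a letter $x_j\in M_{\alpha_j}$ can be brought into the first block if every letter to its left either lies in $M_{\alpha_j}$ (impossible in a reduced word, since then a reduction from $R_{\textsf v}$ would apply after shuffling) or has support adjacent to $\alpha_j$ in $\Gamma$. Taking, for each vertex $\alpha$ occurring this way, the (necessarily unique, by reducedness) letter of $w$ with support $\alpha$, and collecting them in any order consistent with a shuffle, gives a complete block $w_1$; reducedness of $w$ guarantees these letters are not in $I$ and that their supports induce a complete subgraph. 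Then $w$ is shuffle equivalent to $w_1\circ w'$ for a reduced word $w'$, and one recurses on $w'$. Maximality of the greedy choice is exactly what forces condition~\ref{it:lcrf3}: if some $\alpha\in\supp(w_{i+1})$ were adjacent to all of $\supp(w_i)$, that letter could have been absorbed into $w_i$. Termination is clear since $|w'|<|w|$ (assuming $w_1$ is nonempty, which holds whenever $w\neq\epsilon$; the base case $w=\epsilon$ is the empty LFNF). One should check at this step that the recursion is consistent, i.e. the blocks $w_2\circ\cdots$ produced really do continue to satisfy~\ref{it:lcrf3} against $w_1$ — this follows because the first-block letters of $w'$ are precisely the letters that could not go into $w_1$.

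For **uniqueness**, suppose $w_1\circ\cdots\circ w_k$ and $w_1'\circ\cdots\circ w_h'$ are both LFNFs of the same element. By Theorem~\ref{thm:conf} the two words are shuffle equivalent, so they have the same length and the same multiset of letters. The key claim is that $\supp(w_1)=\supp(w_1')$, and moreover that $w_1$ and $w_1'$ contain the same letters (hence $[w_1]=[w_1']$, in fact they are shuffle equivalent as both are complete blocks on the same support). To see $\supp(w_1')\subseteq\supp(w_1)$: take $\alpha\in\supp(w_1')$ and let $x$ be the letter of $w_1'$ with support $\alpha$; since the two words are shuffle equivalent, $x$ occurs in $w_1\circ\cdots\circ w_k$, say in block $w_i$; if $i>1$, apply condition~\ref{it:lcrf3} repeatedly along $w_1,\dots,w_i$ to produce, for each $j<i$, a vertex $\beta_j\in\supp(w_j)$ non-adjacent to $\alpha$ — but a non-adjacency obstructs shuffling $x$ leftward past $w_j$, which shows $x$ cannot appear in the first block $w_1'$ of a shuffle-equivalent word, a contradiction. (This "obstruction to commuting left" is the technical lemma I expect to be the crux; it should be proved by analysing shuffle sequences and noting that the relative order of two non-commuting letters is a shuffle invariant, a standard fact about trace monoids / Levi's lemma for graph products.) By symmetry $\supp(w_1)=\supp(w_1')$, and since in a complete block there is exactly one letter per support vertex and both blocks draw from the same letter multiset, $w_1$ and $w_1'$ use the same letters, so $[w_1]=[w_1']$. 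Then cancel the first block — $w_2\circ\cdots\circ w_k$ and $w_2'\circ\cdots\circ w_h'$ are LFNFs of the same (cancelled) element, since the defining conditions~\ref{it:lcrf1}--\ref{it:lcrf3} are inherited by the tail — and induct on word length to conclude $k-1=h-1$ and $[w_i]=[w_i']$ for $i\ge 2$.

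The **main obstacle**, as flagged above, is the combinatorial lemma underpinning uniqueness: that whether a given occurrence of a letter $x$ can be shuffled to the front of a reduced word is determined by the pattern of non-adjacencies between $\supp(x)$ and the supports of letters preceding it, and that this is invariant under shuffling. Everything else — the greedy construction, the inheritance of the LFNF conditions by tails, and the induction — is bookkeeping. Since Theorem~\ref{thm:uniqueness} is quoted from \cite{dandan:2023}, in the present paper I would give the short version: cite the greedy existence argument, isolate the non-commutation invariance as a one-line consequence of shuffle equivalence preserving the subword order on any fixed non-adjacent pair of support classes, and run the cancellation induction.
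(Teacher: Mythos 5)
The paper offers no proof of this statement: it is imported verbatim from \cite{dandan:2023} (note the \texttt{\char`\\qqed} in the statement), so there is no in-paper argument to compare against. Your reconstruction follows the standard route --- greedy extraction of a maximal left block for existence, and shuffle-invariance of the relative order of non-commuting letter occurrences for uniqueness --- and the overall strategy is sound.

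There is, however, one concretely false step in the uniqueness half. You assert that if the letter $x$ with $\supp(x)=\alpha$ sits in block $w_i$ with $i>1$, then repeated use of condition (iii) of Definition~\ref{defn:lcrf} produces, for \emph{every} $j<i$, a vertex $\beta_j\in\supp(w_j)$ non-adjacent to $\alpha$. Condition (iii) does not give this: it yields only a \emph{chain} of non-adjacencies between consecutive blocks. For instance, with $V=\{a,b,c\}$ and the single edge $(a,c)$, the word $x_a\circ x_b\circ x_c$ (one letter per vertex) is a LFNF with three singleton blocks, yet $\supp(w_1)=\{a\}$ is adjacent to $c=\supp(w_3)$. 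Fortunately your argument only needs the case $j=i-1$: condition (iii) supplies a letter $y$ in $w_{i-1}$ whose support $\beta$ satisfies $(\alpha,\beta)\notin E$, so $y$ precedes $x$ in every word shuffle equivalent to $w$; if $x$ lay in the first block $w_1'$ of the other LFNF, then $y$ would too, and $w_1'$ would contain two letters with non-adjacent (or equal) supports, contradicting that it is a complete block. With that repair the support argument, and hence the induction, goes through. The two steps you leave implicit --- that shuffles cannot reorder two occurrences whose supports are not joined by an edge, and the cancellation of the common first block before inducting (which requires cancellativity of the underlying trace monoid, or a Levi-type factorisation of shuffle equivalences) --- are genuine lemmas rather than pure bookkeeping, but they are standard and you have correctly flagged them as the technical crux.
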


Theorem~\ref{thm:uniqueness} allows us to define the {\em left block length}  $\ell^l_\Gamma(u)$ of $u\in X^*$ as the number of blocks of any LFNF of $u$; where the graph $\Gamma$ is understood, we write more simply $\ell^l(u)$.
The notion of {\it right Foata normal form} is defined in a dual way and we may define the {\em right block length} $\ell^r_\Gamma(u)$, or $\ell^r(u)$ of $u\in X^*$. In fact, these are equal, and equal to a further useful parameter, as we will show in the next section.

\section{Block length}\label{sec:blocklength}

Throughout this section  we consider a graph product  $\mathscr{GP}=\mathscr{GP}(\Gamma,\mathcal{M})$.

\begin{prop}\label{thm:length_min_no_blcok}  Let $u$ be a reduced word written as a product
$u = u_1 \circ \dots \circ u_n$ such that each $u_i$ is a complete block. Then $\ell^l(u) \leq n$.
\end{prop}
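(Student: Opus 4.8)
The plan is to prove this by induction on $n$, the number of complete blocks in the given factorisation $u = u_1 \circ \dots \circ u_n$. The base case $n = 1$ is immediate: $u = u_1$ is itself a reduced complete block, hence a one-block LFNF of itself, so $\ell^l(u) = 1 \leq 1$. For the inductive step, assume the claim holds for all reduced words expressible as a product of fewer than $n$ complete blocks. Given $u = u_1 \circ \dots \circ u_n$, consider $v = u_1 \circ \dots \circ u_{n-1}$. This is a subword of a reduced word, hence reduced, and is a product of $n-1$ complete blocks, so by the inductive hypothesis $\ell^l(v) \leq n-1$; fix a LFNF $v_1 \circ \dots \circ v_k$ of $v$ with $k \leq n-1$.

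The heart of the argument is to show $\ell^l(u) = \ell^l(v \circ u_n) \leq k + 1$. Here the idea is to take the LFNF $v_1 \circ \dots \circ v_k$ of $v$ and append $u_n$, then repair it into a genuine LFNF without increasing the number of blocks beyond $k+1$. The natural procedure: starting from the word $v_1 \circ \dots \circ v_k \circ u_n$, which is reduced (as $[v_1\circ\dots\circ v_k\circ u_n] = [u]$ and one checks no reduction applies — or one argues via Theorem~\ref{thm:conf} that all reduced forms have the same length as $u$), push letters of $u_n$ leftward through the blocks $v_k, v_{k-1}, \dots$ as far as the commutation relations $R_{\textsf e}$ allow. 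Concretely, a letter $x \in M_\alpha$ occurring in $u_n$ can be absorbed into block $v_i$ if $\alpha$ is joined in $\Gamma$ to every vertex of $\supp(v_{i+1}) \cup \dots \cup \supp(v_k)$ and moreover either $\alpha \notin \supp(v_i)$ with $\supp(v_i) \cup \{\alpha\}$ still complete, or $\alpha \in \supp(v_i)$ so that $x$ merges with the existing $M_\alpha$-letter of $v_i$. Whatever letters of $u_n$ cannot be absorbed into any $v_i$ form a new final block $w_{k+1}$; the condition (iii) of Definition~\ref{defn:lcrf} for the pair $(w_k, w_{k+1})$ holds precisely because those letters were not absorbable into $v_k$. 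One must check that the resulting word $w_1 \circ \dots \circ w_{k+1}$ still satisfies all three LFNF conditions — in particular that absorbing letters does not violate condition (iii) between earlier consecutive blocks, which follows because absorption only enlarges supports in a controlled, downward-compatible way, or can be handled by re-running the repair from left to right. Then $\ell^l(u) \leq k+1 \leq (n-1)+1 = n$, completing the induction.

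I expect the main obstacle to be the bookkeeping in the repair step: verifying carefully that after absorbing as many letters of $u_n$ as possible into the blocks $v_1, \dots, v_k$, the word produced is genuinely a LFNF, i.e. that conditions (ii) and (iii) are preserved across \emph{all} block boundaries and not just the last one. A clean way to organise this is to invoke Theorem~\ref{thm:uniqueness}: since LFNF is essentially unique, it suffices to exhibit \emph{some} LFNF of $u$ with at most $n$ blocks, and the existence half of Theorem~\ref{thm:uniqueness} combined with an argument that the LFNF-construction algorithm applied to $v_1\circ\dots\circ v_k\circ u_n$ terminates in $\le k+1$ blocks may be the most economical route. An alternative, possibly cleaner, strategy avoids the explicit algorithm: show directly that for any reduced word $w$ and any complete block $c$, one has $\ell^l(w \circ c) \leq \ell^l(w) + 1$ as a standalone lemma (this is the $n=2$ case relative to a LFNF of $w$), and then iterate. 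The subtlety there is the same — controlling how $c$ interacts with the \emph{last} block of the LFNF of $w$ — but it isolates the combinatorial core into a single, reusable statement.
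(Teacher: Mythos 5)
Your skeleton---induct on $n$, replace the prefix $u_1\circ\dots\circ u_{n-1}$ by a LFNF $v_1\circ\dots\circ v_k$ with $k\le n-1$, then show that appending the complete block $u_n$ costs at most one extra block---is exactly the skeleton of the paper's argument (phrased there as a minimal counterexample rather than an induction). The gap is that the ``repair'' step, which is the entire content of the proposition, is deferred as acknowledged bookkeeping, and the one substantive claim you make about it is false: absorbing a letter $x\in M_\alpha$ of $u_n$ into $v_i$ does \emph{not} affect the LFNF conditions in a ``downward-compatible'' way. Enlarging $\supp(v_i)$ by $\alpha$ helps condition (iii) of Definition~\ref{defn:lcrf} at the boundary $(v_i,v_{i+1})$ but imposes a \emph{new} requirement at the boundary $(v_{i-1},v_i)$, namely that some vertex of $\supp(v_{i-1})$ be non-adjacent to $\alpha$; this can fail unless you insist on absorbing $x$ into the \emph{leftmost} block $v_i$ for which $\alpha$ is adjacent to everything in $\supp(v_i)\cup\dots\cup\supp(v_k)$. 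Verifying (i)--(iii) across all boundaries after a simultaneous multi-letter absorption is then genuinely fiddly, and ``re-run the repair from left to right'' is not an argument. (A minor point in your favour: the ``merge with an existing $M_\alpha$-letter'' case is vacuous, since $u_n$ is a complete block and such a merge would be a reduction of the reduced word $u$.)

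The paper sidesteps all of this with one idea you are missing: a \emph{secondary} minimality on the length of $u_n$. Take a counterexample with $n$ minimal and, for that $n$, $u_n$ as short as possible, and let $p_1\circ\dots\circ p_m$ be a LFNF of $u_1\circ\dots\circ u_{n-1}$, so $m\le n-1$. If $m<n-1$, minimality of $n$ gives a contradiction. If $m=n-1$ and some letter $x$ of $u_n$ is adjacent to all of $\supp(p_m)$, then $p_1\circ\dots\circ p_{m-1}\circ(p_m\circ x)\circ u_n'$ exhibits $u$ as a product of $n$ complete blocks with a strictly shorter last block, contradicting the secondary minimality---no LFNF conditions need to be checked here at all. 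In the only remaining case no letter of $u_n$ is adjacent to all of $\supp(p_m)$, so $p_1\circ\dots\circ p_m\circ u_n$ already satisfies Definition~\ref{defn:lcrf}(iii) and is a LFNF with $n$ blocks. This ``peel off one absorbable letter and restart'' device collapses your bookkeeping into a three-case analysis, and I would adopt it. Note also that your fallback of iterating a single-letter lemma would have to be made self-contained: the paper's Lemmas~\ref{key2} and~\ref{lem:k,k-1} are proved \emph{after}, and using, this proposition, so invoking them here would be circular.
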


\begin{proof}
We suppose that the assertion is not true, and pick a counter-example $u=u_1\circ\dots\circ u_n$ where $n$ is as small as possible, and, for that $n$, the length of $u_n$ is as small as possible.
Let $p_1\circ\dots \circ p_m$ be a LFNF of $u_1\circ\dots \circ u_{n-1}$ with blocks $p_1,\dots,p_m$. Then $m\leq n-1$.
Consider $p_1\circ\dots \circ p_m\circ u_n$.
If $m<n-1$ we obtain a contradiction by the minimality of $n$.
If $m=n-1$ and there is a letter $x$ in $u_n$ such that $\supp(x)$ is adjacent to all vertices in $\supp(p_m)$, then we have a contradiction to the minimality of $u_n$. 
In the remaining case, $p_1 \circ \dots \circ p_m \circ u_n$ is a LFNF of $u$ and so $\ell^l(u) = n$, which contradicts the choice of counter-example. The result follows.
\end{proof}
The dual to Proposition~\ref{thm:length_min_no_blcok} holds for right block length.

From Proposition~\ref{thm:length_min_no_blcok} and its dual we deduce the main result of this section. 

\begin{thm} \label{thm:blocklength} Let $w\in X^*$ be a reduced word. Then $\ell^l(w)=\ell^r(w)$ and is the least number of complete blocks whose product is equivalent to $w$.
\end{thm}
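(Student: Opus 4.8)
The plan is to combine Proposition~\ref{thm:length_min_no_blcok}, its dual, and Theorem~\ref{thm:uniqueness} to pin down all three quantities simultaneously. Write $b(w)$ for the least number of complete blocks whose product is equivalent to $w$; this is well-defined since, by Theorem~\ref{thm:conf}, $w$ is equivalent to a reduced word $v$, and $v$ can be read letter by letter as a product of singleton complete blocks, giving at least one factorisation into complete blocks. I would first observe that a LFNF of $w$ is itself an expression of (a reduced form of) $w$ as a product of complete blocks, by conditions~\ref{it:lcrf1} and~\ref{it:lcrf2} of Definition~\ref{defn:lcrf}; hence $b(w)\leq \ell^l(w)$, and dually $b(w)\leq \ell^r(w)$.

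For the reverse inequalities, suppose $w$ is equivalent to a product $u_1\circ\dots\circ u_n$ of complete blocks with $n=b(w)$. A mild technical point to address here is that Proposition~\ref{thm:length_min_no_blcok} is stated for a reduced word $u=u_1\circ\dots\circ u_n$, whereas $u_1\circ\dots\circ u_n$ need not a priori be reduced; but since each $u_i$ is reduced and a reduction can only merge or delete letters, any reduced form of $u_1\circ\dots\circ u_n$ is again a product of at most $n$ complete blocks (deleting an identity letter removes a singleton block, merging two letters from the same $M_\alpha$ keeps the blocks complete), so without loss of generality we may assume $u_1\circ\dots\circ u_n$ is reduced. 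Now $w$ and $u_1\circ\dots\circ u_n$ are equivalent reduced words, hence shuffle equivalent by Theorem~\ref{thm:conf}, and in particular $\ell^l$ is a well-defined invariant of the equivalence class by Theorem~\ref{thm:uniqueness}; applying Proposition~\ref{thm:length_min_no_blcok} to $u_1\circ\dots\circ u_n$ gives $\ell^l(w)=\ell^l(u_1\circ\dots\circ u_n)\leq n=b(w)$. The dual of Proposition~\ref{thm:length_min_no_blcok} gives $\ell^r(w)\leq b(w)$ in the same way.

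Putting the chains of inequalities together yields $b(w)\leq \ell^l(w)\leq b(w)$ and $b(w)\leq \ell^r(w)\leq b(w)$, so $\ell^l(w)=\ell^r(w)=b(w)$, which is exactly the assertion.

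I do not expect any serious obstacle: the substance is entirely carried by Proposition~\ref{thm:length_min_no_blcok} and its dual. The only thing requiring a little care is the bookkeeping around reducedness alluded to above — checking that passing to a reduced form of an arbitrary product of complete blocks does not increase the block count and preserves completeness of each surviving block — together with the remark that $\ell^l$ and $\ell^r$ genuinely depend only on the $R^\sharp$-class (which is immediate from Theorems~\ref{thm:conf} and~\ref{thm:uniqueness}), so that it is legitimate to write $\ell^l(w)$ for a reduced word $w$ and evaluate it on any equivalent word.
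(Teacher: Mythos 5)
Your proposal is correct and follows exactly the route the paper intends: the paper's entire proof is the one-line assertion that the theorem follows from Proposition~\ref{thm:length_min_no_blcok} and its dual, and your sandwich $b(w)\leq\ell^l(w)$, $b(w)\leq\ell^r(w)$ (from LFNFs being products of complete blocks) together with $\ell^l(w),\ell^r(w)\leq b(w)$ (from the proposition and its dual) is precisely that deduction. You are in fact more careful than the paper, since you explicitly address the point that a minimal product of complete blocks equivalent to $w$ need not be reduced, and your reduction-tracking argument for why a reduced form is again a product of at most $n$ complete blocks is sound.
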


\begin{defn}\label{defn:block} For any reduced word $w\in X^*$ we write $\ell(w)$ for  $\ell^l(w)=\ell^r(w)$ and refer to  $\ell(w)$ as the {\em block length} of $w$.  More generally, for any  $u\in X^*$ we define  $\ell(u)=\ell(w)$  where $w$ is  any reduced form of $u$, and again  refer to  $\ell(u)$ as the {\em block length} of $u$.
\end{defn}

We note in passing that for the case of graph products of semigroups (which we will consider in Section~\ref{sec:main}) the fact that $\ell^l(w)=\ell^r(w)$  appears in \cite[Lemma 3.8]{yang:2023}.

We now want to examine what happens to the block length of a word when multiplied by a single letter, for which we need a bit of preparation.

\begin{lemma}\label{key3}
Let $x_1\circ \dots \circ x_n\in X^+$ be a reduced word. If there exists $1\leq k\leq n$ such that $(\supp(x_k), \supp(x_j))\in E$ for all $k<j\leq n$, then $x_1\circ \dots \circ x_{k-1}\circ x_{k+1}\circ \dots\circ x_n$ is also reduced.
\end{lemma}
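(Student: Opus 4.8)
The plan is to reduce the statement to two elementary facts: that shuffling the letter $x_k$ all the way to the right turns $w := x_1\circ\dots\circ x_n$ into $w'\circ x_k$, where $w' := x_1\circ\dots\circ x_{k-1}\circ x_{k+1}\circ\dots\circ x_n$ is the word in question; and that any ``prefix'' of a reduced word is reduced.

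First I would note that, by hypothesis, $(\supp(x_k),\supp(x_j))\in E$ for every $k<j\le n$, so each of the equalities $x_k\circ x_j=x_j\circ x_k$ belongs to $R_{\textsf{e}}$. Applying these as shuffles successively for $j=k+1,k+2,\dots,n$ moves $x_k$ rightwards past $x_{k+1},\dots,x_n$ one letter at a time (after each step the next letter to be moved is again immediately to the right of $x_k$), so $w$ is shuffle equivalent to $w'\circ x_k$. Since reducedness is, by Definition~\ref{def of reduced}, a property of the whole shuffle-equivalence class of a word, the hypothesis that $w$ is reduced gives that $w'\circ x_k$ is reduced.

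Next I would establish the general fact that if a word $u\circ v\in X^*$ is reduced, then so is $u$. Arguing contrapositively, suppose $u$ is not reduced; then there is a word $u''$ shuffle equivalent to $u$ to which a reduction applies, i.e.\ $u''$ contains either a letter from $I$ (removable via $R_{\textsf{id}}$) or two consecutive letters from a common vertex monoid (mergeable via $R_{\textsf{v}}$). The same sequence of shuffles carries $u\circ v$ to $u''\circ v$, and the same reduction, performed at the same position, applies to $u''\circ v$; hence $u\circ v$ is not reduced. Applying this with $u=w'$ and $v=x_k$ yields that $w'$ is reduced, as claimed.

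I do not expect a serious obstacle: the argument uses only Definition~\ref{defn:shuffle} and Definition~\ref{def of reduced} and the observation that a reduction performed inside a subword survives the appending of further letters on the right. The only points that need a little care are the bookkeeping in the sequence of shuffles that carries $x_k$ to the end, and the precise (but routine) statement and proof of the ``prefix of a reduced word is reduced'' lemma.
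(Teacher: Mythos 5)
Your proposal is correct and follows essentially the same route as the paper: shuffle $x_k$ to the right end using the edges guaranteed by the hypothesis, conclude that $x_1\circ\dots\circ x_{k-1}\circ x_{k+1}\circ\dots\circ x_n\circ x_k$ is reduced, and then drop the final letter. The only difference is that you spell out the ``prefix of a reduced word is reduced'' step, which the paper treats as immediate.
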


\begin{proof}
Clearly,  $x_1\circ \dots \circ x_n$ is shuffle equivalent to $x_1\circ \dots \circ x_{k-1}\circ x_{k+1}\circ \dots\circ x_n\circ x_k$, implying that $x_1\circ \dots \circ x_{k-1}\circ x_{k+1}\circ \dots\circ x_n\circ x_k$ is reduced. Hence so is $x_1\circ \dots \circ x_{k-1}\circ x_{k+1}\circ \dots\circ x_n$.
\end{proof}

\begin{lemma}\label{key4}
Let $w=w_1\circ \dots \circ w_k$ be a LFNF with blocks $w_i$, $1\leq i\leq k$.
Let $1 \leq j \leq k$ and let $t\in M_{\alpha}$ be a  letter appearing in $w_j$.

\textup{(i)}
Suppose that $(\alpha, \beta)\in E$ for all $\beta\in \supp(w_{j+1}\circ \dots\circ w_k)$, and  let $w_j'$ be obtained from $w_j$ by deleting $t$.
If $w_j'\neq \epsilon$, then  $w_1\circ \dots \circ w_{j-1}\circ w_j'\circ w_{j+1}\circ \dots \circ w_{k}$ is a LFNF with blocks $w_1, \dots,  w_{j-1}, w_j', w_{j+1}, \dots, w_{k}$. 
Further, $w_j'=\epsilon$ can only occur if $j=k$.

\textup{(ii)}
Let $w_j'$ be obtained from $w_j$ by replacing $t$ by any $z\in M_\alpha\setminus\{1_\alpha\}$. Then $w_1\circ \dots \circ w_{j-1}\circ w_j'\circ w_{j+1}\circ \dots \circ w_{k}$ is a LFNF with blocks $w_1, \dots, w_{j-1}, w_j', w_{j+1},\dots, w_{k}$.
\end{lemma}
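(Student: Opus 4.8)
The plan is to verify, in each case, the three defining conditions of a left Foata normal form (Definition~\ref{defn:lcrf}) for the modified word, namely: that it is reduced, that each block is a complete block, and that the "non-commuting descent" condition~\ref{it:lcrf3} holds between consecutive blocks. The only blocks that change are $w_j$ (replaced by $w_j'$), so most of the checking reduces to examining the interfaces $w_{j-1}\,\big|\,w_j'$ and $w_j'\,\big|\,w_{j+1}$, plus the effect on reducedness.

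For part~(i): reducedness of $w_1\circ\dots\circ w_{j-1}\circ w_j'\circ w_{j+1}\circ\dots\circ w_k$ is the crux, and this is exactly what Lemma~\ref{key3} is for. The hypothesis is that $\supp(t)=\alpha$ is adjacent to every vertex in $\supp(w_{j+1}\circ\dots\circ w_k)$; within the block $w_j$ itself, $t$ is the unique letter with support $\alpha$ and $\supp(w_j)$ is a complete subgraph, so $\alpha$ is adjacent to the supports of all the other letters of $w_j$ sitting to its right as well. Hence $t$ can be shuffled to the very end of $w$, and Lemma~\ref{key3} (applied after such a shuffle, or a routine adaptation of it) shows deleting $t$ leaves a reduced word. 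The block $w_j'$ is still a complete block, since it is obtained from a complete block by deleting one letter (its support shrinks to $\supp(w_j)\setminus\{\alpha\}$ if $t$ was the only $\alpha$-letter, and either way the induced subgraph stays complete and no $I$-letters are introduced). For condition~\ref{it:lcrf3}: at the interface $w_j'\,\big|\,w_{j+1}$ we need, for each $\gamma\in\supp(w_{j+1})$, some $\beta\in\supp(w_j')$ with $(\gamma,\beta)\notin E$; the original word gave such a $\beta\in\supp(w_j)$, and since $\gamma$ is adjacent to $\alpha$ (by hypothesis $\alpha$ is adjacent to all of $\supp(w_{j+1})$) we have $\beta\neq\alpha$, so $\beta\in\supp(w_j')$ and the condition survives. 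At the interface $w_{j-1}\,\big|\,w_j'$: for each $\alpha'\in\supp(w_j')\subseteq\supp(w_j)$ the original witness $\beta\in\supp(w_{j-1})$ with $(\alpha',\beta)\notin E$ still works. Finally, the claim that $w_j'=\epsilon$ forces $j=k$: if $w_j$ consisted of the single letter $t$ and $j<k$, then $\alpha$ is adjacent to all of $\supp(w_{j+1})$, contradicting condition~\ref{it:lcrf3} for the pair $(w_j,w_{j+1})$ in the original LFNF.

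For part~(ii): here the support is unchanged, $\supp(w_j')=\supp(w_j)$, since we swap $t\in M_\alpha$ for $z\in M_\alpha\setminus\{1_\alpha\}$ — a non-identity letter with the same support. So $w_j'$ is still a complete block (complete support, no $I$-letters, one letter per vertex), and conditions~\ref{it:lcrf2} and~\ref{it:lcrf3} are literally unaffected because they only depend on the supports of the blocks. The one thing to check is reducedness: the only way a length-reducing step could become available is via an $R_\textsf{v}$ reduction combining $z$ with another $\alpha$-letter that can be shuffled adjacent to it, but such an $\alpha$-letter would also have been shuffleable adjacent to $t$, contradicting reducedness of $w$; and no $R_{\textsf{id}}$ reduction is possible since $z\neq 1_\alpha$. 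Hence the modified word is reduced, and all three conditions hold.

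I expect the main obstacle to be the reducedness argument in part~(i): one must be careful that after deleting $t$ no new reduction is created involving letters that were previously separated by $t$ but can now be brought together. The clean way to handle this is to first shuffle $t$ to the rightmost position of the whole word — legitimate because $\alpha$ is adjacent to the support of every letter to the right of $t$, both inside $w_j$ (completeness of the block) and in $w_{j+1}\circ\dots\circ w_k$ (the hypothesis) — and then invoke Lemma~\ref{key3} directly, or simply observe that a reduced word with its last letter removed is reduced. Everything else is bookkeeping with supports.
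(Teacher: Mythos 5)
Your proof is correct and follows essentially the same route as the paper: Lemma~\ref{key3} for reducedness after deleting $t$, the observation that the witness $\delta$ for condition~\ref{it:lcrf3} cannot equal $\alpha$, and the support-invariance argument for part~(ii). The only (harmless) divergence is the final claim of (i): you derive the contradiction for $j<k$, $w_j=t$ directly from condition~\ref{it:lcrf3} of the original LFNF, whereas the paper merges $t$ into $w_{j+1}$ to get a $(k-1)$-block decomposition contradicting Theorem~\ref{thm:blocklength} --- both are valid.
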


\begin{proof} We begin by remarking that $t$ is the unique letter of $w_j$ with support $\alpha$.

(i) 
We first verify the second claim. Suppose that $j<k$ and $w_j'=\epsilon$, that is, $w_j=t$. It follows that
\[w=w_1\circ \dots \circ w_{j-1}\circ (t\circ w_{j+1})\circ \dots\circ w_k\]
is a decomposition of $w$ into $k-1$ complete blocks, contradicting Theorem~\ref{thm:blocklength}. Hence, if $w_j'=\epsilon$, then $j=k$.

Now suppose that $w_j'\neq \epsilon$. By Lemma \ref{key3},  deleting the letter $t$, and bearing in mind that $w_j$ is a complete block,  we obtain that  $w_1\circ \dots \circ w_{j-1}\circ w_j'\circ w_{j+1}\circ \dots \circ w_{k}$ is reduced. Since $w$ is a LFNF, for each $\beta\in \supp(w_{j+1})$, there exists $\delta\in \supp(w_j)$ such that $(\beta, \delta)\notin E$. Further, as $(\supp(t), \gamma)\in E$ for all $\gamma \in \supp(w_{j+1}\circ \dots\circ w_k)$, we deduce that $\delta\neq \supp(t)$, so that $\delta\in \supp(w_j')$, implying that $w_1\circ \dots \circ w_{j-1}\circ w_j'\circ w_{j+1}\circ \dots \circ w_{k}$ is a LFNF.

(ii) The proof is immediate.
\end{proof}

We can now state and prove three results which describe the effect on a LFNF and block length of multiplication by a single letter.

\begin{lemma}\label{key2}
Let $w=w_1\circ \dots \circ w_k$ be a LFNF with blocks $w_i$, $1\leq i\leq k$, and let $x\in X\backslash I$.
Then $\ell(w\circ x)=k+1$ if and only if  $\supp(w_k\circ x)$ is an incomplete subgraph, in which case
\[w_1\circ \dots \circ w_k\circ x\]
 is a LFNF for $w\circ x$ with blocks $w_i,\dots,w_k,x$.
\end{lemma}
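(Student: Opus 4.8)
The plan is to prove both implications separately, using the characterisation of LFNF and the block length results already established.

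First I would handle the ``if'' direction. Suppose $\supp(w_k\circ x)$ is an incomplete subgraph. Since $x\notin I$ and $w_k$ is a complete block, $w_k\circ x$ is reduced (no letter cancellation is possible: $x$ could only interact with a letter of $w_k$ if $\supp(x)$ were already in $\supp(w_k)$ and joined to all other vertices of $\supp(w_k)$, but that would make $\supp(w_k\circ x)=\supp(w_k)$ complete, a contradiction). Hence $w_1\circ\dots\circ w_k\circ x$ is reduced, condition \ref{it:lcrf1} holds; conditions \ref{it:lcrf2} and \ref{it:lcrf3} for the first $k$ blocks are inherited from $w$ being a LFNF; the single-letter word $x$ is trivially a complete block, so \ref{it:lcrf2} holds for block $k+1$; and condition \ref{it:lcrf3} between $w_k$ and $x$ says precisely that $\supp(x)$ is not joined to all of $\supp(w_k)$, which is exactly the assumption that $\supp(w_k\circ x)$ is incomplete. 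So $w_1\circ\dots\circ w_k\circ x$ is a LFNF for $w\circ x$ with $k+1$ blocks, and by Theorem~\ref{thm:uniqueness}, $\ell(w\circ x)=k+1$. This also proves the displayed formula.

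For the ``only if'' direction I would argue contrapositively: suppose $\supp(w_k\circ x)$ is a complete subgraph, and show $\ell(w\circ x)\le k$ (it cannot be less than $k$ since $w$ embeds, but we only need $\le k$, or rather $\ne k+1$). Let $\alpha=\supp(x)$. If $\alpha\notin\supp(w_k)$, then $w_k\circ x$ is itself a complete block (its support is complete by assumption, it is reduced, and each vertex of its support carries a unique letter), so $w_1\circ\dots\circ w_{k-1}\circ(w_k\circ x)$ exhibits $w\circ x$ as a product of $k$ complete blocks, whence $\ell(w\circ x)\le k$ by Proposition~\ref{thm:length_min_no_blcok}. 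If instead $\alpha\in\supp(w_k)$, let $t$ be the unique letter of $w_k$ with support $\alpha$; since $\supp(w_k)$ is complete and $w_k\circ x$ is reduced only after merging, $t\circ x$ (or rather the product $tx$ in $M_\alpha$) collapses, so $w_k\circ x$ is equivalent to the complete block obtained from $w_k$ by replacing $t$ with $tx$ if $tx\ne 1_\alpha$, or deleting $t$ if $tx=1_\alpha$. In the former case we again have $w\circ x$ as a product of $k$ complete blocks; in the latter case, $w\circ x$ is equivalent to a product of $\le k$ complete blocks (the block $w_k$ with $t$ removed, which may be empty). Either way Proposition~\ref{thm:length_min_no_blcok} gives $\ell(w\circ x)\le k<k+1$, completing the contrapositive.

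The main obstacle I anticipate is the bookkeeping in the ``only if'' direction when $\alpha\in\supp(w_k)$: one must be careful that $w_k\circ x$ really is equivalent to the claimed single complete block. The point is that $\supp(w_k)$ complete forces every letter of $w_k$ to commute with every other, so $x$ (with support already in $\supp(w_k)$) can be shuffled next to $t$ and merged via $R_{\textsf{v}}$; the resulting word has complete support and at most one letter per vertex, hence is a complete block (or empty). Once this equivalence is in hand, invoking Proposition~\ref{thm:length_min_no_blcok} is routine. I would also note that in every sub-case the resulting number of blocks is at most $k$, which is all that is needed to contradict $\ell(w\circ x)=k+1$.
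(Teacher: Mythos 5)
Your proof is correct and takes essentially the same route as the paper: the ``if'' direction by verifying the LFNF conditions directly (incompleteness of $\supp(w_k\circ x)$ being precisely condition \ref{it:lcrf3} between $w_k$ and $x$, and also what blocks $x$ from shuffling past $w_k$ to interact with earlier blocks, a point worth making explicit when you assert $w\circ x$ is reduced), and the ``only if'' direction contrapositively by exhibiting $w\circ x$ as a product of at most $k$ complete blocks. One small citation point: for the final step you should invoke Theorem~\ref{thm:blocklength} (block length is the least number of complete blocks whose product is \emph{equivalent} to the word) rather than Proposition~\ref{thm:length_min_no_blcok}, since the latter requires the displayed product to be reduced, which your decomposition $w_1\circ\dots\circ w_{k-1}\circ w_k'$ is, but not without a further word of justification.
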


\begin{proof}
Suppose $\supp(x)=\{\alpha\}$, i.e. $x\in M_\alpha$.
If $\supp(w_k\circ x)$ is incomplete, then $\alpha\notin \supp(w_k)$ and there exists $\beta\in \supp(w_k)$ such that $(\alpha, \beta)\notin E$, implying that $w\circ x$ is reduced. Further,  $w_1\circ \dots \circ w_k\circ x$ is a LFNF  with blocks $w_1,\dots,w_k,x$.
Conversely, if $\supp(w_k\circ x)$ is complete, then from Theorem~\ref{thm:blocklength}, $\ell(w\circ x)\leq k$.
\end{proof}

\begin{lemma}\label{lem:k,k-1}
Let $w=w_1\circ \dots \circ w_k$ be a LFNF with blocks $w_i$, $1\leq i\leq k$, and let $x\in M_\alpha\setminus\{1_\alpha\}$. Suppose that  $\supp(w_k\circ x)$ is a complete subgraph.

\textup{(i)} We have that $\ell(w\circ x)=k-1$ if and only if $\supp(w_k)=\{\alpha\}$ and $w_kx=1_\alpha$ in $M_\alpha$, in which case
$w_1\circ \dots \circ w_{k-1}$
 is a LFNF for $w\circ x$ with blocks $w_i$, $1\leq i\leq k-1$.

\textup{(ii)} Otherwise, $\ell(w\circ x)=k$ and  $w\circ x$ has  a LFNF $w_1\circ \dots \circ w_{i-1}\circ w_i' \circ w_{i+1}\circ \dots \circ w_{k}$
with blocks $w_1,\dots,w_{i-1},w_i',w_{i+1},\dots,w_k$,  where $1\leq i\leq k$ and $w_i'\neq \epsilon$ is a reduced form of $w_i\circ x$.
\end{lemma}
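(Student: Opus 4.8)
The plan is to analyze the LFNF of $w \circ x$ by tracking where the letter $x \in M_\alpha \setminus \{1_\alpha\}$ "lands" when pushed through the blocks $w_k, w_{k-1}, \dots$ via shuffles. Since $\supp(w_k \circ x)$ is complete, Theorem~\ref{thm:blocklength} gives $\ell(w \circ x) \le k$, so the only possibilities are $\ell(w\circ x) \in \{k-1, k\}$ (it cannot drop by $2$ or more, since appending a single letter to a word with a LFNF of length $k-1$ or more would, together with $w_1 \circ \dots \circ w_{k-1}$, give too short a decomposition into complete blocks — more precisely, $\ell(w) = k$ and $\ell(w\circ x) \le k - 2$ would force $\ell(w) \le \ell(w\circ x) + 1 \le k-1$ by appending the inverse-free cancellation argument, a contradiction; I would phrase this cleanly using the fact that removing the last letter of a reduced word decreases block length by at most $1$, which follows from Proposition~\ref{thm:length_min_no_blcok}). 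So the dichotomy in (i) versus (ii) is genuinely a dichotomy, and I just need to identify each case.

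For part (i): if $\supp(w_k) = \{\alpha\}$ and $w_k x = 1_\alpha$ in $M_\alpha$, then since $w_k$ is a complete block it consists of the single letter $w_k \in M_\alpha$, and applying the relation $R_{\textsf v}$ gives $w_k \circ x = w_k x = 1_\alpha$, which reduces to $\epsilon$ via $R_{\textsf{id}}$; hence $[w \circ x] = [w_1 \circ \dots \circ w_{k-1}]$, and this latter word is a LFNF by hypothesis. Conversely, if $\ell(w \circ x) = k - 1$, I would argue that the letter $x$ must cancel entirely. Since $\supp(w_k \circ x)$ is complete, $x$ commutes with nothing forcing it past $w_k$ unless $\alpha \notin \supp(w_k)$; I would split on whether $\alpha \in \supp(w_k)$. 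If $\alpha \notin \supp(w_k)$, then $w_k \circ x$ is itself a complete block (support is complete and all letters distinct-support, none from $I$), so $w_1 \circ \dots \circ w_{k-1} \circ (w_k \circ x)$ is a decomposition into $k$ complete blocks and I'd show $\ell(w\circ x) = k$ there (it cannot be $k-1$), contradiction. So $\alpha \in \supp(w_k)$; let $t$ be the unique letter of $w_k$ with support $\alpha$. Shuffling $t$ to the end of $w_k$ and combining with $x$ replaces $t$ by $tx \in M_\alpha$. If $tx \ne 1_\alpha$ we land in case (ii) with $i = k$, giving $\ell = k$; so $tx = 1_\alpha$, and then $w_k \circ x$ reduces to $w_k'$ obtained by deleting $t$. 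If $w_k' \ne \epsilon$ then Lemma~\ref{key4}(i) (with $j = k$, so the adjacency hypothesis is vacuous) shows $w_1 \circ \dots \circ w_{k-1} \circ w_k'$ is a LFNF of length $k$, contradiction; hence $w_k' = \epsilon$, i.e. $w_k = t$ has support $\{\alpha\}$ and $w_k x = tx = 1_\alpha$, as required.

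For part (ii): assuming we are not in case (i), I must produce the claimed LFNF. The idea is to find the largest index $i$ such that $x$ can be shuffled leftward to sit immediately after $w_i$ and merge into it; that is, $i$ is determined by letting $x$ commute past $w_k, w_{k-1}, \dots$ as long as $\alpha$ is adjacent to every vertex of the block being passed and $\alpha$ does not occur in it. Concretely, I would argue: since $\supp(w_k \circ x)$ is complete, $\alpha$ is adjacent to every $\beta \in \supp(w_k) \setminus \{\alpha\}$; either $\alpha \in \supp(w_k)$, in which case $i = k$ and $w_i' = w_k'$ is the reduced form of $w_k \circ x$ (nonempty, else we'd be in case (i) by the argument above), or $\alpha \notin \supp(w_k)$, and then $\supp(w_k \circ x)$ complete means $\alpha$ is adjacent to everything in $\supp(w_k)$, so $x$ commutes past $w_k$; iterate, passing $x$ leftward through $w_k, w_{k-1}, \dots, w_{i+1}$ as long as $\alpha \notin \supp(w_j)$ and $\alpha$ is adjacent to all of $\supp(w_j)$, and stop at the first $i$ where this fails. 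I would show the process stops with $1 \le i \le k$ (it must stop by $i = 1$ at the latest, since $x \notin I$ and $w \circ x$ reduced would otherwise make $\ell = k+1$, contradicting $\ell \le k$ — actually stopping is what guarantees $\ell = k$ rather than $k+1$). At the stopping index $i$: if $\alpha \in \supp(w_i)$ then merging gives $w_i \circ x \to w_i'$, a reduced form of $w_i \circ x$, with $\supp(w_i') = \supp(w_i)$ (nonempty), so $w_i'$ is a complete block; if instead $\alpha \notin \supp(w_i)$ but $\alpha$ is not adjacent to some $\delta \in \supp(w_i)$ — wait, that would let $x$ stay as a new letter appended to $w_i$, i.e. $w_i' = w_i \circ x$ is a complete block and $\ell = k$. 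Then I verify the three LFNF conditions: reducedness of $w_1 \circ \dots \circ w_i' \circ \dots \circ w_k$ follows since we only performed shuffles and one length-reducing merge on an already-reduced word (using Lemma~\ref{key3} and Lemma~\ref{key4}(ii) for the case $w_i'$ replaces $t$ by $tx \ne 1_\alpha$, or directly when $w_i' = w_i \circ x$); each $w_j$ with $j \ne i$ is unchanged hence a complete block, and $w_i'$ is a complete block as just argued; and the separation condition~\ref{it:lcrf3} between consecutive blocks: for $j < i-1$ it is inherited unchanged, for the pair $(w_{i-1}, w_i')$ we use $\supp(w_i) \subseteq \supp(w_i')$ so separation is inherited from $(w_{i-1}, w_i)$, and for the pair $(w_i', w_{i+1})$ — here is the one subtle point — we need each $\beta \in \supp(w_{i+1})$ to be non-adjacent to some vertex of $\supp(w_i')$; since $x$ was shuffled leftward past $w_{i+1}$ only if $\alpha$ was adjacent to all of $\supp(w_{i+1})$, and the original separation gave a $\delta \in \supp(w_i)$ with $(\beta, \delta) \notin E$, this $\delta$ lies in $\supp(w_i') \supseteq \supp(w_i)$ and still works; if $i = k$ this pair is absent. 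The main obstacle is organizing this "shuffle $x$ leftward until it gets stuck" bookkeeping cleanly — in particular handling uniformly the two sub-cases at the stopping index ($x$ merges into an existing letter of support $\alpha$, versus $x$ becomes a genuinely new letter of the block) and checking condition~\ref{it:lcrf3} survives the leftward shuffles, which is exactly the adjacency-propagation argument sketched above and mirrors the reasoning already used in Lemma~\ref{key4}(i).
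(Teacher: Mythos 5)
Your overall plan --- push $x$ leftward through the blocks until it either merges with the (unique) letter of support $\alpha$ in some block or gets stuck --- is essentially the paper's argument, and your treatment of the merging sub-case via Lemma~\ref{key4} is sound. However, there is a genuine error in the ``new letter'' sub-case of part (ii). If the leftward passage fails at $w_i$ because $\alpha\notin\supp(w_i)$ and $\alpha$ is not adjacent to some $\delta\in\supp(w_i)$, then $\supp(w_i\circ x)$ contains the non-edge $(\alpha,\delta)$, so $w_i\circ x$ is \emph{not} a complete block: the very non-adjacency that stops $x$ from passing $w_i$ also stops it from joining $w_i$. The letter $x$ must instead be absorbed into $w_{i+1}$, the last block it successfully passed; there $w_{i+1}\circ x$ genuinely is a complete block, since passing $w_{i+1}$ required $\alpha\notin\supp(w_{i+1})$ and $\alpha$ adjacent to all of $\supp(w_{i+1})$. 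Your verification of condition (iii) of Definition~\ref{defn:lcrf} inherits the same off-by-one: for the pair consisting of the modified block and its \emph{predecessor}, the new vertex $\alpha$ needs its own non-neighbour in the predecessor's support, and this is not ``inherited from $\supp(w_i)\subseteq\supp(w_i')$''. In the corrected bookkeeping that non-neighbour is exactly the $\delta$ that halted the passage, which lies in the block immediately before the one absorbing $x$. This is precisely how the paper arranges matters: in the case where $w\circ x$ is reduced it takes $i$ to be the \emph{smallest} index with $(\alpha,\beta)\in E$ for all $\beta\in\supp(w_i\circ\dots\circ w_k)$, so that $w_i\circ x$ is complete, and the minimality of $i$ supplies the required non-neighbour of $\alpha$ in $\supp(w_{i-1})$.

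Two further points. First, since your converse in part (i) for the sub-case $\alpha\notin\supp(w_k)$ is deferred to the part (ii) construction (``I'd show $\ell(w\circ x)=k$ there''), the gap above propagates back into part (i); a decomposition into $k$ complete blocks by itself only gives $\ell(w\circ x)\le k$. Second, your claim that the leftward process ``must stop'' on pain of $\ell(w\circ x)=k+1$ is not the right justification: if $x$ passes every block it simply joins $w_1$, still yielding block length $k$; nothing contradictory occurs. Both issues disappear once the absorption index is corrected as above.
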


\begin{proof} If $w\circ x$ is not reduced, then there exists $1\leq i\leq k$ and a letter $y$ in $w_i$ such that $y\in M_\alpha$ and $(\alpha, \beta)\in E$ for all $\beta\in \supp(w_{i+1}\circ \dots\circ w_k)$. Let $w_i'$ be a reduced form of $w_i \circ x$. If $yx = 1_{\alpha}$, then $w_i'$ is obtained by deleting $y$ from $w_i$. If $yx\neq 1_{\alpha}$, then $w_i'$ is obtained by replacing $y$ by $yx$ in $w_i$.

By Lemma \ref{key4}, the case $w_i'=\epsilon$ can only occur if $i=k$. In this case,
$w_1\circ \dots \circ w_{k-1}$ is a LFNF for $w\circ x$ with blocks $w_1,\dots,w_{k-1}$. Otherwise,  $w_i'\neq \epsilon$, and from  Lemma \ref{key4}  $w\circ x$ has a LFNF $$w_1\circ \dots \circ w_{i-1}\circ w_i'\circ w_{i+1}\circ \dots \circ w_{k}$$ with blocks $w_1,\dots, w_{i-1},w_i',w_{i+1},\dots, w_k$.

If $w\circ x$ is reduced, then $\alpha\notin \supp(w_k)$. Further, as $\supp(w_k\circ x)$ is complete, we have that $(\alpha, \beta)\in E$ for all $\beta\in \supp(w_k)$. Let $1\leq i\leq k$ be the smallest such that $(\alpha, \beta)\in E$ for all $\beta\in \supp(w_i\circ \dots \circ w_k)$. Then $$w_1\circ \dots \circ w_{i-1}\circ (w_i\circ x) \circ w_{i+1}\circ \dots \circ w_{k}$$ is  a LFNF of $w\circ x$ with blocks 
$w_1, \dots,  w_{i-1}, w_i\circ x, w_{i+1},\dots, w_{k}$.
 \end{proof}

\begin{cor}\label{key6}
Let $w$ be a LFNF of block length $k$ and let $x,y\in M_\alpha\backslash \{1_\alpha\}$. Then $\ell(w\circ x)\leq k$ if and only if $\ell(w\circ y)\leq k$. \qqed
\end{cor}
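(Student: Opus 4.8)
The plan is to extract from Lemmas~\ref{key2} and~\ref{lem:k,k-1} the precise combinatorial condition that governs whether $\ell(w\circ x)$ exceeds $k$, and observe that this condition depends only on the support $\alpha$ of the multiplying letter and not on which non-identity element of $M_\alpha$ is chosen. Write $w=w_1\circ\dots\circ w_k$ for a LFNF of $w$. By Lemma~\ref{key2}, for a letter $z\in M_\alpha\setminus\{1_\alpha\}$ we have $\ell(w\circ z)=k+1$ precisely when $\supp(w_k\circ z)=\supp(w_k)\cup\{\alpha\}$ is an incomplete subgraph of $\Gamma$; otherwise, by Lemma~\ref{lem:k,k-1}, $\ell(w\circ z)\le k$ (indeed it is $k$ or $k-1$). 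The key point is that the condition ``$\supp(w_k)\cup\{\alpha\}$ is incomplete'' makes no reference to the particular element $z$: it is a statement purely about the vertex $\alpha$, the set $\supp(w_k)$, and the edge set $E$.

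First I would fix a LFNF $w=w_1\circ\dots\circ w_k$ of $w$; by Theorem~\ref{thm:uniqueness} the support $\supp(w_k)$ of the last block is determined by $w$ (since $[w_k]=[w_k']$ for any other LFNF, and block length is well-defined). Next I would argue the two directions together: $\ell(w\circ x)\le k$ holds $\iff$ $\supp(w_k)\cup\{\alpha\}$ is complete (using Lemma~\ref{key2} for the forward implication, namely that $\ell(w\circ x)=k+1$ forces incompleteness, and Lemma~\ref{lem:k,k-1}(i)--(ii) for the reverse, namely that completeness forces $\ell(w\circ x)\in\{k-1,k\}$). Since $x,y\in M_\alpha\setminus\{1_\alpha\}$ share the same support $\alpha$, the right-hand condition ``$\supp(w_k)\cup\{\alpha\}$ is complete'' is the same for $x$ as for $y$, and therefore $\ell(w\circ x)\le k\iff\ell(w\circ y)\le k$.

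I do not anticipate a genuine obstacle here: once the characterization ``$\ell(w\circ z)\le k\iff \supp(w_k)\cup\{\supp(z)\}$ is complete'' is isolated from the preceding two lemmas, the corollary is immediate because $\supp(x)=\supp(y)=\alpha$. The only mild subtlety to state carefully is that Lemma~\ref{key2} is phrased for general $x\in X\setminus I$ (so it applies verbatim), while Lemma~\ref{lem:k,k-1} is phrased for $x\in M_\alpha\setminus\{1_\alpha\}$ under the hypothesis that $\supp(w_k\circ x)$ is complete — which is exactly the complementary case — so between them they cover all possibilities and pin down $\ell(w\circ x)$ relative to $k$ using only the data $(\supp(w_k),\alpha,E)$. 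It is worth noting in passing that one can read off slightly more: combining with Lemma~\ref{lem:k,k-1}(i), $\ell(w\circ x)=k-1$ iff $\supp(w_k)=\{\alpha\}$ and $w_kx=1_\alpha$ in $M_\alpha$ — but for the corollary as stated only the dichotomy $\le k$ versus $=k+1$ is needed.
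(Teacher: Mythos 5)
Your argument is correct and is precisely the reasoning the paper intends: the corollary is stated with no separate proof because, as you observe, Lemma~\ref{key2} shows $\ell(w\circ z)=k+1$ exactly when $\supp(w_k)\cup\{\alpha\}$ is incomplete, a condition depending only on $\alpha=\supp(z)$ and not on the choice of $z\in M_\alpha\setminus\{1_\alpha\}$. Your additional care about well-definedness of $\supp(w_k)$ and the complementary role of Lemma~\ref{lem:k,k-1} is sound but not essential.
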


\section{The finite case} \label{sec:finite}

The main step in the proof of our main theorem is to deal with the special case 
of a graph product $\mathscr{GP}=\mathscr{GP}(\Gamma,\mathcal{M})$ where both the graph $\Gamma$ and all the vertex monoids in $\mathcal{M}$ are finite. This is the subject of this section, and so these assumptions will apply throughout.

In outline,
 we will define actions of the vertex monoids of $\mathscr{GP}$ on a finite set $F_k$. 
 These actions will extend to an action of $\mathscr{GP}$ itself, and hence they
will induce a morphism from $\mathscr{GP}$ to the  finite full transformation monoid on $F_k$. We will then show how to pick a suitable   $k$ so   that the morphism separates pairs of distinct elements of $\mathscr{GP}$ having block length no greater than $k$.
 
Before we embark on this, we briefly review the basic definitions and notation for actions.

\begin{defn}\label{defn:monoidactions}
Let $M$ be a monoid and let $A$ be a non-empty set. An {\em action of } $M$  on $A$ is a map $*:A\times M\rightarrow A$, where $(a,m)\mapsto a*m$, such that for all $a\in A$ and $s,t\in M$ we have $a*1=a$ and $(a*s)*t=a*st$.
\end{defn}

An action of $M$ on $A$ corresponds precisely  to a monoid morphism $M\rightarrow \mathcal{T}_A$, where $\mathcal{T}_A$ is the full transformation monoid of all maps from $A$ to $A$. Further details may be found in the standard text \cite{kkm:2000}.

\begin{lemma}\label{lem:action}Let $k\in \mathbb{N}$ and let
\begin{align*}
    F_{k} = \{[w]\in \mathscr{GP}\colon \ell(w) \leq k\}.
\end{align*}
Each vertex monoid $M_\alpha$ acts on $F_k$ via:
\[
    [w]*x =\begin{cases}[w\circ x]&\textup{if }\ell(w\circ x)\leq k,\\
{}[w]&\textup{if } \ell(w\circ x)=k+1.\end{cases}
\]
\end{lemma}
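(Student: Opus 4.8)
I need to verify two things: that the formula gives a well-defined map $F_k \times M_\alpha \to F_k$ (in particular, that it lands in $F_k$ and doesn't depend on the chosen representative $w$ of $[w]$), and that it satisfies the two axioms of a monoid action, namely $[w]*1_\alpha = [w]$ and $([w]*x)*y = [w]*(xy)$ for all $x,y \in M_\alpha$.

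First I would address \emph{well-definedness}. The value $\ell(w\circ x)$ depends only on $[w\circ x] = [w][x]$, hence only on $[w]$ and $x$, so the case split is unambiguous and in either case the output element of $\mathscr{GP}$ depends only on $[w]$ and $x$. That the output lies in $F_k$ is immediate: in the first case $\ell(w\circ x)\le k$ by hypothesis, and in the second case we return $[w]$ with $\ell(w)\le k$. So $*$ is a genuine map into $F_k$.

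Next, the axiom $[w]*1_\alpha=[w]$: since $[1_\alpha]=[\epsilon]$ is the identity of $\mathscr{GP}$, we have $[w\circ 1_\alpha]=[w]$ and $\ell(w\circ 1_\alpha)=\ell(w)\le k$, so the first case of the definition applies and returns $[w]$. (Strictly, $1_\alpha \notin M_\alpha\setminus\{1_\alpha\}$ is the trivial case, but the formula still makes sense and gives the right answer.)

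The substance is the associativity axiom $([w]*x)*y=[w]*(xy)$. I would fix a LFNF $w_1\circ\dots\circ w_k$ — wait, of block length $\ell(w)\le k$ — for a reduced form of $w$, and argue by cases according to Lemmas~\ref{key2} and~\ref{lem:k,k-1} on how multiplication by the single letter $x$ (and then $y$) changes the block length. The key dichotomy is whether $\supp(w_m\circ x)$ is complete or incomplete, where $w_m$ is the last block and $m=\ell(w)$. If it is incomplete then $\ell(w\circ x)=m+1$ by Lemma~\ref{key2}; if complete then $\ell(w\circ x)\in\{m-1,m\}$ by Lemma~\ref{lem:k,k-1}. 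The delicate point, and the one I expect to be the main obstacle, is the boundary case where $\ell(w)=k$ and multiplying by $x$ would push the block length to $k+1$: then $[w]*x=[w]$, and I must check $([w]*x)*y=[w]*y$ equals $[w]*(xy)$. Here I would invoke Corollary~\ref{key6}: since $x,y,xy$ all lie in $M_\alpha\setminus\{1_\alpha\}$ (if $xy=1_\alpha$ the analysis is different but easier — then $[w]*(xy)=[w]$ directly), and $\ell(w\circ x)=k+1$ forces $\supp(w_k\circ x)$ incomplete, hence $\supp(w_k)\cup\{\alpha\}$ is an incomplete subgraph, which depends only on $\alpha$ and not on the particular element of $M_\alpha$; so $\ell(w\circ y)=\ell(w\circ(xy))=k+1$ as well, giving $[w]*y=[w]=[w]*(xy)$. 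In the remaining cases where no truncation at level $k+1$ occurs for the relevant products, one has $[w]*x=[w\circ x]$ and then $([w]*x)*y$ is computed from $[w\circ x\circ y]=[w\circ(xy)]$ — using $x\circ y$ and $xy$ being equivalent via $R_{\textsf{v}}$ — and one checks, again via Lemmas~\ref{key2} and~\ref{lem:k,k-1} applied to $w\circ x$ and the letter $y$, that $\ell(w\circ x\circ y)\le k$ iff $\ell(w\circ(xy))\le k$, so the two sides agree. The bookkeeping of these subcases is routine given the earlier lemmas; the only genuinely non-obvious ingredient is the uniformity across $M_\alpha$ supplied by Corollary~\ref{key6}.
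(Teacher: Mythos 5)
Your proposal is correct and follows essentially the same route as the paper: verify the identity axiom directly, then split the associativity check according to whether multiplying by a letter pushes the block length past $k$, using Lemmas~\ref{key2} and~\ref{lem:k,k-1} together with the uniformity across $M_\alpha\setminus\{1_\alpha\}$ supplied by Corollary~\ref{key6}. The one point to make explicit is that when $\ell(w)=k$ and $\ell(w\circ x)\le k$, it is Corollary~\ref{key6} (applied to $x$ and $xy$) that gives $\ell(w\circ x\circ y)=\ell(w\circ xy)\le k$ and so rules out a truncation occurring only at the second step; with that noted your case analysis is exhaustive.
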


\begin{proof}
Let $m\leq k$ and let $w=w_1\circ \dots \circ w_m$ be a LFNF with blocks $w_i$, $1\leq i\leq m$.

Clearly,  $[w\circ 1_\alpha]=[w]$,  so that $\ell(w\circ x)=\ell(w)\leq k$, and hence
$$[w]*1_\alpha=[w\circ 1_\alpha]=[w].$$

Let $x, y\in M_\alpha$. We must show that $([w]*x)*y=[w]*xy$.

If $x=1_{\alpha}$ or $y = 1_{\alpha}$, then the claim is clearly true.
 Assume therefore that $x, y\neq 1_{\alpha} $.

By Lemmas~\ref{key2} and ~\ref{lem:k,k-1} we have that $\ell(w\circ x)\leq m+1$. Hence, if
$m\leq k-1$, then $\ell(w\circ x), \ell(w\circ y)$ and $\ell(w\circ x\circ y)=\ell(w\circ xy)$ are all no greater that $m+1\leq k$, so that in this case,
$$([w]\ast x)*y=[w\circ x]*y=[w\circ x\circ y]=[w\circ (xy)]=[w]*(xy).$$

Suppose that $m=k$. If $\ell(w\circ x)=k+1$, then by Corollary~\ref{key6} we  have that $\ell(w\circ y)=k+1$, and if $xy\neq 1_{\alpha}$, then  also $\ell(w\circ xy)=k+1$.
Hence, whether or not we have $xy\neq 1_{\alpha}$, we deduce
\[([w]*x)*y=[w]*y=[w]=[w]*(xy).\qedhere\]
\end{proof}

The above action of each $M_{\alpha}$ induces a morphism from $M_{\alpha}$ to $\mathcal{T}_{F_k}$. We now aim to show that this induces  a morphism from the entire
$\mathscr{GP}$ to  $\mathcal{T}_{F_k}$. 
The key step is the following lemma, which shows that the defining relations $R_{\textsf{e}}$ are respected by the actions of the~$M_\alpha$.

With some abuse of notation we use $*$ for the action of any $M_{\alpha}$ on $F_k$.
It will always be clear from the context to which specific action $*$ refers.

\begin{lemma}\label{lem:thecrux} If $x\in M_{\alpha}$ and $y\in M_{\beta}$ and $(\alpha,\beta)\in E$, then
\[([w]*x)*y=([w]*y)*x\] for all $[w]\in F_k.$
\end{lemma}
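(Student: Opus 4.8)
The statement asserts that the actions of $M_\alpha$ and $M_\beta$ on $F_k$ commute when $(\alpha,\beta)\in E$. Since this is a statement about two letters $x\in M_\alpha$, $y\in M_\beta$ and a single element $[w]\in F_k$, I would fix a LFNF $w=w_1\circ\dots\circ w_m$ of $w$ (with $m\le k$) and argue by analysing how appending $x$ and then $y$ affects the block length and the Foata form. If $x=1_\alpha$ or $y=1_\beta$ both sides are trivially equal, so assume $x,y\ne 1$. The natural dichotomy is on whether $m\le k-1$ or $m=k$.

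\textbf{Case $m\le k-1$.} Here there is plenty of room: by Lemmas~\ref{key2} and~\ref{lem:k,k-1} each single-letter multiplication raises the block length by at most $1$, so $\ell(w\circ x)$, $\ell(w\circ y)$, $\ell(w\circ x\circ y)$ are all $\le m+1\le k$. Hence every intermediate element stays inside $F_k$ and the action is literally multiplication in $\mathscr{GP}$: $([w]*x)*y=[w\circ x\circ y]$ and $([w]*y)*x=[w\circ y\circ x]$. Since $(\alpha,\beta)\in E$, the relation $x\circ y=y\circ x$ lies in $R_{\textsf e}$, so $[w\circ x\circ y]=[w\circ y\circ x]$ and we are done.

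\textbf{Case $m=k$.} This is where the genuine work lies, because now appending a letter may or may not push us out of $F_k$, and the two operations $*x$ and $*y$ interact. The plan is to split on whether $\ell(w\circ x)\le k$ and whether $\ell(w\circ y)\le k$. By Corollary~\ref{key6} the condition $\ell(w\circ x)\le k$ depends only on $\alpha$ (not on the particular non-identity element of $M_\alpha$), and similarly for $\beta$; moreover $\ell(w\circ x)\le k$ is equivalent, via Lemma~\ref{key2}, to $\supp(w_k\circ x)=\supp(w_k)\cup\{\alpha\}$ being a complete subgraph, i.e. to $\alpha$ being adjacent to every vertex of $\supp(w_k)$ (or $\alpha\in\supp(w_k)$). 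The sub-cases are: (a) both $\ell(w\circ x)=\ell(w\circ y)=k+1$: then $[w]*x=[w]$ and $[w]*y=[w]$, so $([w]*x)*y=[w]*y=[w]=[w]*x=([w]*y)*x$. (b) exactly one stays inside, say $\ell(w\circ x)\le k$ but $\ell(w\circ y)=k+1$: then $[w]*y=[w]$, so the right-hand side is $[w]*x=[w\circ x]$; for the left-hand side we need $([w\circ x])*y$, so I must check $\ell(w\circ x\circ y)=k+1$. Since $y\notin\supp(w_k)$ and $\beta$ is not adjacent to all of $\supp(w_k)$, and since appending $x$ to $w$ either merges into an existing block or becomes part of $w_k$ (as $\ell(w\circ x)\le k$ forces $\supp(w_k\circ x)$ complete), the last block of a LFNF of $w\circ x$ has support contained in $\supp(w_k)\cup\{\alpha\}$; using $(\alpha,\beta)\in E$, adjacency of $\beta$ to the whole of this set would force adjacency to $\supp(w_k)$, a contradiction — so $\ell((w\circ x)\circ y)=k+1$ and the left-hand side is also $[w\circ x]$. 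The symmetric sub-case is identical. (c) both $\ell(w\circ x)\le k$ and $\ell(w\circ y)\le k$: then $([w]*x)*y=[w\circ x]*y$ and $([w]*y)*x=[w\circ y]*x$; here I would check that $\ell(w\circ x\circ y)\le k$ as well (the last block can absorb both $\alpha$ and $\beta$ since, using $(\alpha,\beta)\in E$, $\supp(w_k)\cup\{\alpha,\beta\}$ is still a complete subgraph), so both sides equal $[w\circ x\circ y]=[w\circ y\circ x]$ using the $R_{\textsf e}$-relation again.

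\textbf{Main obstacle.} The routine part is the bookkeeping; the one genuinely delicate point is sub-case (b) of the $m=k$ case, where I must rule out that appending $x$ first creates enough room that $y$ can then be absorbed. This requires understanding precisely which vertex set can appear as the support of the last block of a LFNF of $w\circ x$ when $\ell(w\circ x)\le k$, and then exploiting the edge $(\alpha,\beta)$ to transport non-adjacency of $\beta$ from $\supp(w_k)$ to this new last block. I expect to lean on Lemma~\ref{lem:k,k-1} (which describes the LFNF of $w\circ x$ explicitly) together with Lemma~\ref{key2} and Corollary~\ref{key6}, and possibly on Lemma~\ref{key4} to track how the last block changes. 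Once that incompatibility is pinned down, all remaining sub-cases collapse to either "both sides equal $[w]$", "both sides equal $[w\circ x]$ (or $[w\circ y]$)", or "both sides equal $[w\circ x\circ y]=[w\circ y\circ x]$ by an $R_{\textsf e}$ shuffle", and the proof is complete.
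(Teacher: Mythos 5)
Your overall strategy is exactly the paper's: dispose of the identity letters, then case-split on whether appending $x$ and/or $y$ pushes the block length past $k$, use the edge $(\alpha,\beta)$ to show the ``mixed'' case forces $\ell(w\circ x\circ y)=k+1$, and in the ``both within $F_k$'' case keep every intermediate product inside $F_k$ so that the $R_{\textsf{e}}$ relation finishes the job (your sub-case (c) argument, that $\supp(w_k)\cup\{\alpha,\beta\}$ is complete so Proposition~\ref{thm:length_min_no_blcok} gives $\ell(w\circ x\circ y)\le k$, is the paper's argument verbatim). The one place where your sketch has genuine gaps is precisely sub-case (b), and they are two. First, when $\ell(w\circ x)\le k$ you tacitly assume the letter $x$ is absorbed into (or merged with a letter of) some block, so that the last block of a LFNF of $w\circ x$ is $w_k$ possibly modified at the vertex $\alpha$. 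But Lemma~\ref{lem:k,k-1}(i) allows a third outcome: $\ell(w\circ x)=k-1$, when $\supp(w_k)=\{\alpha\}$ and $w_kx=1_\alpha$, in which case the new last block is $w_{k-1}$, your support claim fails, and in fact the two sides of the lemma would genuinely differ (the left side would be $[w_1\circ\dots\circ w_{k-1}\circ y]$, the right side $[w_1\circ\dots\circ w_{k-1}]$). This scenario must be ruled out, and the paper does so using the hypothesis $(\alpha,\beta)\in E$: if $\supp(w_k)=\{\alpha\}$ then $\supp(w_k\circ y)=\{\alpha,\beta\}$ would be complete, contradicting $\ell(w\circ y)=k+1$ via Lemma~\ref{key2}. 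Your write-up never makes this exclusion.

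Second, your containment goes the wrong way for the adjacency-transport step. Knowing only that the new last block has support $S\subseteq\supp(w_k)\cup\{\alpha\}$ does not let you conclude that adjacency of $\beta$ to all of $S$ forces adjacency of $\beta$ to all of $\supp(w_k)$; for that you need $S\supseteq\supp(w_k)\setminus\{\alpha\}$. What Lemma~\ref{lem:k,k-1} actually yields (once the $k-1$ case is excluded) is that $S$ is one of $\supp(w_k)$, $\supp(w_k)\cup\{\alpha\}$ or $\supp(w_k)\setminus\{\alpha\}$, and then the witness $\gamma\in\supp(w_k)$ with $(\beta,\gamma)\notin E$ satisfies $\gamma\ne\alpha$ (because $(\alpha,\beta)\in E$), hence $\gamma\in S$ in all three cases and $\supp(S\circ y)$ is incomplete, giving $\ell(w\circ x\circ y)=k+1$ as you want. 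So your plan is the right one and matches the paper, but as written the crucial sub-case (b) needs both the exclusion of the whole-block cancellation and the corrected (lower) bound on the support of the new last block.
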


\begin{proof}
Without loss of generality we may assume that $w=w_1\circ \dots \circ w_m$ is a LFNF with blocks $w_i$, $1\leq i\leq m\leq k$.  Clearly, the result is true if either $x=1_\alpha$ or $y=1_\beta$.

Suppose that $x, y\notin I$.
We split our considerations into cases depending on $\ell(w\circ x)$ and $\ell(w\circ y)$.

{\em Case 1: $\ell(w\circ x)=k+1$ and $\ell(w\circ y)=k+1$.} We have $[w]*x=[w]$ and $[w]*y=[w]$, and so $$([w]*x)*y=[w]*y=[w]=[w]*x=([w]*y)*x.$$

{\em Case 2: $\ell(w\circ x)\leq k$ and $\ell(w\circ y)=k+1$.} This time $[w]*x=[w\circ x]$ and $[w]*y=[w]$.
 It follows from  Lemmas~\ref{key2} and ~\ref{lem:k,k-1}
that $m=\ell(w)=k$ and that
$\supp(w_k\circ y)$ is incomplete.
If $\ell(w\circ x)=k-1$, then, by Lemma ~\ref{lem:k,k-1}, we have $\supp(w_k)=\{\alpha\}$ and $w_kx=1_\alpha$. 
On the other hand, as $\ell(w\circ y)=k+1$, we have that $\supp(w_k\circ y)$ is incomplete, again, by Lemma~\ref{lem:k,k-1}. But this contradicts the assumption that $(\alpha,\beta)\in E$. Therefore, $\ell(w\circ x)=k$.
It follows from Lemma~\ref{lem:k,k-1}  that $w\circ x$ has a LFNF $$w_1\circ \dots \circ w_{i-1}\circ (w_i\circ x)' \circ w_{i+1}\circ \dots \circ w_{k}$$
 $(w_i\circ x)'\neq \epsilon$ is a reduced form of $w_i\circ x$ for some $1\leq i\leq k$. If $i<k$, then we know
 $\supp(w_k\circ y)$ is incomplete. 
 If $i=k$, then $\supp(w_k')$ is equal to one of $\supp(w_k)$, $\supp(w_k)\cup \{\alpha\}$ or $\supp(w_k)\setminus \{\alpha\}$; but as
 $(\alpha,\beta)\in E$ we have in any of these cases that $\supp(w_k'\circ y)$ is incomplete. It follows that  $\ell(w\circ x\circ y)=k+1$. Then
\[([w]*x)*y=[w\circ x]*y=[w\circ x]=[w]*x=([w]*y)*x.\]

{\em Case 3: $\ell(w\circ x)= k+1$ and $\ell(w\circ y)\leq k$.} This is dual to Case 2.

{\em Case 4: $\ell(w\circ x)\leq k$ and $\ell(w\circ y)\leq k$.}  We have $[w]*x = [w\circ x]$ and $[w]*y = [w\circ y]$.
We claim that
\begin{equation}
\label{eq:lwyx}
\ell(w\circ x \circ y) = \ell (w\circ y \circ x) \leq m+1 \leq k.
\end{equation}

To see this, first suppose
 that $m < k$. Then  $[w\circ x \circ y] = 
[w_1 \circ \dots \circ w_m \circ (x\circ y)]=[ w_1\circ \dots w_m \circ (y\circ x)]= [w \circ y \circ x]$.  Certainly $\supp(x\circ y)$ is a complete block since   
$(\alpha,\beta) \in E$,
 and \eqref{eq:lwyx} follows by 
Theorem~\ref{thm:blocklength}.
Next, suppose that $m = k$. By Lemma~\ref{lem:k,k-1} we know that  $\supp(w_k\circ x)$ and $\supp(w_k\circ y)$ are complete graphs. Then, again since $(\alpha,\beta)\in E$, we have that  $\supp(w_k\circ x\circ y)$ and $\supp(w_k\circ y \circ x)$ are complete graphs, so their (common) reduced form is a complete block. 
Hence \eqref{eq:lwyx} follows using  Theorem~\ref{thm:length_min_no_blcok}.

Having established \eqref{eq:lwyx}, we have
\[([w]*x)*y = [w\circ x]*y = [w \circ x \circ y] = [w \circ y \circ x] = [w\circ y] * x = ([w]*y)*x,\]
completing the proof of this case and the lemma.
\end{proof}

Let us denote by $\theta_\alpha$ the morphism $M_{\alpha}\rightarrow \mathcal{T}$ induced by the action of $M_\alpha$ on $F_k$;
thus 
\[
[w](x\theta_{\alpha})=[w]*x\quad ([w]\in F_k,\  x\in M_{\alpha}).
\]

We let $\theta:X\rightarrow \mathcal{T}_{F_k}$ be $\bigcup_{\alpha\in V}\theta_\alpha$.

\begin{prop}\label{prop:action} Let $k\in\N$. Then there is a morphism $\Theta:\mathscr{GP}\rightarrow \mathcal{T}_{F_k}$ given by
\[[w]([u]\Theta)=[w](x_1\theta)(x_2\theta)\dots (x_m\theta)\,\,\mbox{ where }u=x_1\circ x_2\circ \dots\circ x_m\in X^*.\]
 \end{prop}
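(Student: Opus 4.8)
The goal is to show that the assignment $x \mapsto x\theta$ ($x \in X = \bigcup_{\alpha} M_\alpha$) extends to a well-defined monoid morphism $\Theta \colon \mathscr{GP} \to \mathcal{T}_{F_k}$. Since $\mathscr{GP}$ is given by the presentation $\langle X \mid R\rangle$ with $R = R_{\textsf{id}} \cup R_{\textsf{v}} \cup R_{\textsf{e}}$, the universal property of monoid presentations says it suffices to check that the map $X^* \to \mathcal{T}_{F_k}$, $u = x_1 \circ \dots \circ x_m \mapsto (x_1\theta)(x_2\theta)\cdots(x_m\theta)$, respects each defining relation in $R$; that is, if $(u,v) \in R$ then this product of transformations agrees for $u$ and $v$. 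Equivalently, writing the action of $\mathcal{T}_{F_k}$ on $F_k$ on the right, it is enough to verify that for every $[w] \in F_k$ and every relation $(u,v) \in R$ we have $[w](x_1\theta)\cdots(x_m\theta) = [w](y_1\theta)\cdots(y_n\theta)$ where $u = x_1\circ\dots\circ x_m$, $v = y_1\circ\dots\circ y_n$. I would phrase the first paragraph of the proof exactly this way, invoking the standard fact that a morphism $X^* \to T$ into a monoid $T$ factors through $X^*/R^\sharp$ iff it identifies the two sides of each relation in $R$.

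\textbf{Checking the three families of relations.} For $R_{\textsf{id}}$: the relation is $1_\alpha = \epsilon$, and on the left we get the transformation $1_\alpha\theta_\alpha$, which by Lemma~\ref{lem:action} is the identity map on $F_k$ (since $[w]*1_\alpha = [w]$), matching the empty product. For $R_{\textsf{v}}$: the relation is $x \circ y = xy$ with $x,y \in M_\alpha$, and we need $(x\theta_\alpha)(y\theta_\alpha) = (xy)\theta_\alpha$ as elements of $\mathcal{T}_{F_k}$; but this is precisely the statement that $*$ is an action of $M_\alpha$ on $F_k$, i.e.\ $([w]*x)*y = [w]*(xy)$ for all $[w] \in F_k$, which is exactly what Lemma~\ref{lem:action} established. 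For $R_{\textsf{e}}$: the relation is $x \circ y = y \circ x$ with $x \in M_\alpha$, $y \in M_\beta$, $(\alpha,\beta) \in E$, and we need $(x\theta)(y\theta) = (y\theta)(x\theta)$, i.e.\ $([w]*x)*y = ([w]*y)*x$ for all $[w] \in F_k$ — and this is precisely Lemma~\ref{lem:thecrux}. So all three relation families are handled by previously proved results.

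\textbf{Conclusion.} Having verified that the map on $X^*$ respects all relations in $R$, it descends to a monoid morphism $\Theta \colon \mathscr{GP} = X^*/R^\sharp \to \mathcal{T}_{F_k}$, and by construction $[w]([u]\Theta) = [w](x_1\theta)(x_2\theta)\cdots(x_m\theta)$ whenever $u = x_1\circ\dots\circ x_m$, which is the asserted formula; in particular the right-hand side is independent of the choice of representative word $u$ for the element $[u]$, as it must be. I expect essentially no obstacle here: all the real content — that $*$ is a well-defined action of each $M_\alpha$ on the finite set $F_k$, and that commuting vertex monoids act in commuting fashion — has already been extracted into Lemmas~\ref{lem:action} and~\ref{lem:thecrux}. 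The only mild point to be careful about is bookkeeping with left-versus-right conventions: $\mathcal{T}_{F_k}$ acts on the right of $F_k$, so composition of transformations reads left-to-right, and one should confirm that the relation checks above are stated in the matching order (they are). Thus the proof is a short assembly of the preceding lemmas via the universal property of the presentation.
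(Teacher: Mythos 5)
Your proof is correct and follows exactly the paper's argument: lift $\theta$ to the free monoid $X^*$ and check that $R_{\textsf{id}}\cup R_{\textsf{v}}$ lies in the kernel via Lemma~\ref{lem:action} and $R_{\textsf{e}}$ via Lemma~\ref{lem:thecrux}, so the map descends to $\mathscr{GP}$. The only difference is that you spell out the relation-by-relation checks a little more explicitly than the paper does.
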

 \begin{proof} Since $X^*$ is free on $X$, the map $\theta$ lifts to a morphism $\theta':X^*\rightarrow F_k$, where
 $(x_1\circ x_2\circ \dots\circ x_m)\theta'=(x_1\theta)(x_2\theta)\dots (x_m\theta)$. To show that $\theta'$ induces $\Theta$ as given, we need only show that
 the relations $R=R_{\textsf{id}}\cup R_{\textsf{v}}\cup R_{\textsf{e}}\subseteq \ker \theta'$. The fact that $R_{\textsf{id}}\cup R_{\textsf{v}}\subseteq \ker \theta'$ follows from Lemma~\ref{lem:action}
 and the fact that $R_{\textsf{e}}\subseteq \ker \theta'$ from Lemma~\ref{lem:thecrux}.
\end{proof}

Note that in the language of actions, Proposition~\ref{prop:action} says that
$\mathscr{GP}$ acts on $F_k$ by
\[
[w]*[u]=[w]*x_1*x_2*\dots  *x_n,\,\,\mbox{ where }u=x_1\circ x_2\circ \dots\circ x_m\in X^*.
\]

\begin{prop}\label{main_prop_graph_products}
Any graph product of finite monoids with respect to a finite graph is residually finite.
\end{prop}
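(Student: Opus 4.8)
The plan is to use the morphism $\Theta \colon \mathscr{GP} \to \mathcal{T}_{F_k}$ constructed in Proposition~\ref{prop:action} and show that, for a suitable choice of $k$, it separates any prescribed pair of distinct elements. Fix distinct elements $[u], [v] \in \mathscr{GP}$, with reduced forms of block lengths $\ell(u)$ and $\ell(v)$ respectively. Set $k = \max\{\ell(u), \ell(v)\}$, so that both $[u]$ and $[v]$ lie in $F_k$. Since $\Gamma$ is finite and each $M_\alpha$ is finite, there are only finitely many complete blocks, hence $F_k$ is finite and $\mathcal{T}_{F_k}$ is a finite monoid. It therefore suffices to exhibit some $[w] \in F_k$ with $[w] * [u] \neq [w] * [v]$, since then $[u]\Theta \neq [v]\Theta$.

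The natural choice is $[w] = [\epsilon]$, the class of the empty word, which lies in $F_k$ as $\ell(\epsilon) = 0$. I would claim that $[\epsilon] * [u] = [u]$ whenever $\ell(u) \leq k$, and likewise for $v$; granting this, $[\epsilon]*[u] = [u] \neq [v] = [\epsilon]*[v]$ and we are done. To prove the claim, write a reduced form of $u$ as a product of complete blocks $b_1 \circ \dots \circ b_k$ (padding with empty blocks if $\ell(u) < k$) and each block as a product of its letters; then $[\epsilon] * [u]$ is computed by applying the letters of $u$ one at a time via the $*$-action of Lemma~\ref{lem:action}. The point is that, processing the letters in the left Foata order, at every intermediate stage the accumulated word is $[\epsilon \circ (\text{prefix of } u)] = [\text{that prefix}]$, whose block length never exceeds $\ell(u) \leq k$ (block length is monotone along prefixes of a reduced word, and multiplying by a single letter increases block length by at most one, by Lemmas~\ref{key2} and~\ref{lem:k,k-1}); hence the truncating clause of the action is never invoked, and $[\epsilon]*[u] = [u]$.

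The main obstacle is the bookkeeping in that last claim: one must be careful that the $*$-action, which is defined letter-by-letter on an arbitrary word representative, genuinely reconstructs $[u]$ and never "loses" a letter because some intermediate block length hits $k+1$. This is handled by choosing to process the letters of $u$ in an order compatible with a reduced form — equivalently, using the fact from Proposition~\ref{prop:action} that the action of $\mathscr{GP}$ on $F_k$ is well defined, so the value $[\epsilon]*[u]$ does not depend on the representative word chosen for $[u]$, and then selecting the representative to be a product of complete blocks of length $\ell(u)$. Every proper nonempty prefix of such a representative is itself reduced and has block length at most $\ell(u) \le k$, so by Lemmas~\ref{key2} and~\ref{lem:k,k-1} appending the next letter keeps the block length within $\{k-1, k, k+1\}\cap[0,k]$ — in fact at most $\ell(u)\le k$ — so the action never truncates. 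Assembling these observations gives $[\epsilon]\Theta^{-1}$-separation of $[u]$ and $[v]$, and since $[u],[v]$ were arbitrary distinct elements, $\mathscr{GP}$ is residually finite.
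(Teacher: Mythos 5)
Your proposal is correct and follows essentially the same route as the paper: set $k=\max\{\ell(u),\ell(v)\}$, apply the morphism $\Theta$ of Proposition~\ref{prop:action}, and show $[\epsilon]([u]\Theta)=[u]$ and $[\epsilon]([v]\Theta)=[v]$ because no intermediate prefix ever exceeds block length $k$, so the truncating clause of the action is never invoked. The only (harmless) difference is at the justification of the prefix bound: you fix a LFNF representative and invoke well-definedness of $\Theta$ so that Proposition~\ref{thm:length_min_no_blcok} applies directly, whereas the paper works with prefixes of the arbitrary reduced word $u$ and cites Theorem~\ref{thm:blocklength}.
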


\begin{proof}  Let $\mathscr{GP}$  be a finite graph product of finite monoids. Let $[u]$ and $[v]$ be distinct elements of $\mathscr{GP}$; we may assume that $u$ and $v$ are reduced. Choose $k$ to be the greater of $\ell(u)$ and $\ell(v)$ and consider the morphism $\Theta:\mathscr{GP}\rightarrow\mathcal{T}_{F_k}$ from
Proposition~\ref{prop:action}. It follows from Theorem~\ref{thm:blocklength} that
for any $w\in X^*$ where $u=w\circ t$ or $v=w\circ t$, we have that $\ell(w)\leq k$. Consequently, writing $u=u_1\circ\dots \circ u_h\in X^*$, we see that
\begin{align*}
[\epsilon] ([u]\Theta)&= [\epsilon] (u_1\theta\circ \dots \circ u_h\theta)=[\epsilon]*u_1*u_2*\dots  *u_h\\
&=[u_1]*u_2*\dots  *u_h
=[u_1\circ u_2]*\dots  *u_h
=\dots
=[u_1\circ\dots \circ u_h]= [u].
\end{align*}

Similarly, $[\epsilon] ([v]\Theta)=[v]$ and since $[u]\neq [v]$ we obtain that $[u]\Theta\neq [v]\Theta$, as required.
\end{proof}

\section{The main results}\label{sec:main}

We can now prove our main result.

\begin{thm}\label{thm:main}
Any graph product of monoids is residually finite if and only if each vertex monoid is residually finite. 
\end{thm}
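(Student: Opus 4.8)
The plan is to prove the two implications separately. The "only if" direction is immediate: by Lemma~\ref{lem:thedeletiontrick} (with $V'=\{\alpha\}$, so $\Gamma'$ a single vertex) each vertex monoid $M_\alpha$ is a retract, hence a submonoid, of $\mathscr{GP}(\Gamma,\mathcal{M})$; and residual finiteness is clearly inherited by submonoids (a separating morphism on the big monoid restricts to one on the submonoid). So the work is all in the "if" direction: assuming each $M_\alpha$ is residually finite, show $\mathscr{GP}=\mathscr{GP}(\Gamma,\mathcal{M})$ is residually finite.

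For the "if" direction, fix distinct $[u]\neq[v]$ in $\mathscr{GP}$, with $u,v$ reduced words. The strategy is to reduce to the already-settled finite case, Proposition~\ref{main_prop_graph_products}, in two stages. First, pass from $\Gamma$ to a finite induced subgraph: let $V'=\supp(u)\cup\supp(v)$, a finite set, let $\Gamma'$ be the induced subgraph on $V'$, and let $\mathcal{M}'=\{M_\alpha:\alpha\in V'\}$. By Lemma~\ref{lem:thedeletiontrick}, $\mathscr{GP}(\Gamma',\mathcal{M}')$ is a retract of $\mathscr{GP}(\Gamma,\mathcal{M})$; since $u,v$ are words over $\bigcup_{\alpha\in V'}M_\alpha$, their images $[u]',[v]'$ in $\mathscr{GP}(\Gamma',\mathcal{M}')$ are still distinct (the retraction $\mathscr{GP}\to\mathscr{GP}(\Gamma',\mathcal{M}')$ restricted appropriately, or more simply the fact that the natural map $\mathscr{GP}(\Gamma',\mathcal{M}')\to\mathscr{GP}(\Gamma,\mathcal{M})$ is a split injection, shows $[u]\ne[v]$ forces $[u]'\ne[v]'$). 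So it suffices to separate $[u]'$ and $[v]'$ by a morphism to a finite monoid. Hence we may assume from now on that $V$ is finite.

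Second, with $V$ finite, replace each (possibly infinite) $M_\alpha$ by a finite quotient. Only finitely many vertices are involved, and for each $\alpha\in V$ pick the finitely many pairs of distinct letters of $M_\alpha$ that actually occur among the letters of $u$ and $v$ (there are finitely many such letters); since $M_\alpha$ is residually finite, there is a morphism $\phi_\alpha\colon M_\alpha\to N_\alpha$ onto a finite monoid $N_\alpha$ that separates all these finitely many pairs — and, crucially, we also require $\phi_\alpha$ to separate each such occurring letter from $1_\alpha$ when the letter is $\ne 1_\alpha$ (a residually finite monoid separates any element from the identity, and a finite intersection/product of separating morphisms still has finite image). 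We may take the $N_\alpha$ mutually disjoint. By Lemma~\ref{la:morph} the $\phi_\alpha$ induce a morphism $\phi\colon\mathscr{GP}(\Gamma,\mathcal{M})\to\mathscr{GP}(\Gamma,\mathcal{N})$ with $\mathcal{N}=\{N_\alpha:\alpha\in V\}$, and $\mathscr{GP}(\Gamma,\mathcal{N})$ is a graph product of finite monoids over a finite graph, hence residually finite by Proposition~\ref{main_prop_graph_products}. It remains to check that $[u]\phi\ne[v]\phi$ in $\mathscr{GP}(\Gamma,\mathcal{N})$; composing with a separating morphism from $\mathscr{GP}(\Gamma,\mathcal{N})$ to a finite monoid then finishes the proof.

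The main obstacle is precisely this last verification: showing that the reduced words $u,v$ over $\bigcup M_\alpha$ have $[u]\phi\ne[v]\phi$. The point is that $\phi$ acts letterwise, sending a letter $x\in M_\alpha$ to $x\phi_\alpha\in N_\alpha$; one must ensure the images $\bar u,\bar v$ of $u,v$ (as words over $\bigcup N_\alpha$) are still reduced and remain inequivalent. Because each $\phi_\alpha$ was chosen to separate every occurring non-identity letter from $1_\alpha$, no letter of $\bar u$ or $\bar v$ becomes an identity, so no new $R_{\textsf{id}}$-reduction is created; the support of each letter is preserved, so the Foata/block structure of $u$ and $v$ (via Theorems~\ref{thm:conf} and~\ref{thm:uniqueness}) is transported faithfully to $\bar u,\bar v$, keeping them reduced of the same block length and with corresponding blocks. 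Comparing the blocks of $u$ and $v$: if $u\not\equiv v$ then by Theorems~\ref{thm:conf} and~\ref{thm:uniqueness} either they have different block lengths (preserved under $\phi$, done) or some block $u_i$ differs from the corresponding $v_i$ up to shuffle, which — since blocks are complete and each vertex contributes a single letter — means for some $\alpha$ the letters of $u_i,v_i$ at vertex $\alpha$ differ; these are among the pairs we arranged $\phi_\alpha$ to separate, so the images differ, hence $\bar u\not\equiv\bar v$. This bookkeeping, matching blocks vertex-by-vertex and invoking the normal-form uniqueness, is where the care is needed; everything else is routine.
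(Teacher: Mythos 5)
Your proposal is correct and takes essentially the same route as the paper: reduce to a graph product of finite monoids over a finite graph via Lemma~\ref{la:morph} and Lemma~\ref{lem:thedeletiontrick}, apply Proposition~\ref{main_prop_graph_products}, and check that the induced morphism still separates the two given elements because the finite quotients were chosen to separate all occurring letters from each other and from the identities, so the image words stay reduced and inequivalent. The only cosmetic differences are that you perform the two reduction steps in the opposite order and verify the final separation via LFNF uniqueness (Theorem~\ref{thm:uniqueness}) rather than the paper's shuffle-equivalence argument via Theorem~\ref{thm:conf}.
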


\begin{proof} 
Let $\mathscr{GP} = \mathscr{GP}(\Gamma, \mathcal{M})$ be a graph product of monoids, with $\Gamma=(V,E)$ and $\mathcal{M}=\{ M_\alpha\colon \alpha\in V\}$.
As already mentioned in the Introduction, the `only if' part follows from the fact that all $M_\alpha$ embed into $\mathscr{GP}$.
For the `if' part, suppose that all $M_\alpha$ are residually finite.
Let
$[v]$ and $[w]$ be distinct elements of $\mathscr{GP}$.
We may assume that $v=x_1 \circ \dots \circ x_n, w=y_1 \circ \dots \circ y_m\in X^*$ are reduced. 
Let $V'=\supp(v)\cup \supp(w)$ and let $Y=\{ x_1,\dots, x_n,y_1,\dots, y_m\}$. For each $\alpha\in V'$ the residual finiteness of $M_{\alpha}$ allows us choose a finite monoid $M_{\alpha}'$ and a morphism $\theta_{\alpha}:M_{\alpha}\rightarrow M'_{\alpha}$ which separates the distinct elements of $(M_{\alpha}\cap Y)\cup \{ 1_{\alpha}\}$. For $\alpha\notin V'$ we let
$M_{\alpha}'=M_{\alpha}$ and let $\theta_{\alpha}:M_{\alpha}\rightarrow M_{\alpha}$ be the identity morphism.
Let $\mathscr{GP'} = \mathscr{GP}(\Gamma, \mathcal{M'})$ where $\mathcal{M'} = \{M'_{\alpha} \colon \alpha \in V\}$. 
By Lemma~\ref{la:morph},  $\mathscr{GP'}$ is a  morphic image of $\mathscr{GP}$ via the morphism $\theta$ which extends each $\theta_{\alpha}$.

We claim that $[v]\theta\neq [w]\theta$.
Clearly, $[v]\theta=[x_1\theta\circ\dots \circ x_n\theta]$, $[w]\theta=[y_1\theta\circ\dots \circ y_m\theta]$ 
and 
$x_1\theta\circ\dots \circ x_n\theta$, $y_1\theta\circ\dots \circ y_m\theta$ are reduced words.
If $ [v]\theta=[w]\theta$, then we would be able to shuffle $x_1\theta\circ\dots \circ x_n\theta$ to $y_1\theta\circ\dots \circ y_m\theta$ by Theorem~\ref{thm:conf}. Hence, by a corresponding sequence of moves, we could shuffle $x_1 \circ \dots \circ x_n$ to $y_1 \circ \dots \circ y_m$, a contradiction. Therefore, $[v]\theta\neq [w]\theta$, as claimed.

Let  $\Gamma'=(V',E')$ be  the induced subgraph on $V'$. By Lemma \ref{lem:thedeletiontrick} there exists a retraction $\phi:\mathscr{GP}'\rightarrow  \mathscr{GP''}$ , where $\mathcal{M''}=\{ M_\alpha':\alpha\in V'\}$ and $\mathscr{GP''}=\mathscr{GP}(\Gamma', \mathcal{M''})$. By construction, we have $[v]\theta\phi\neq [w]\theta\phi$.

Note that $\mathscr{GP''}$ is a graph product of finite monoids with respect to a finite graph.
Hence it is residually finite by Proposition \ref{main_prop_graph_products}.
Thus there exist a finite monoid $M$ and a morphism $\psi:\mathscr{GP''}\rightarrow M$ such that $[v]\theta\phi\psi\neq [w]\theta\phi\psi$.
But then the composition $\theta\phi\psi$ is a morphism from our original graph product $\mathscr{GP}$ into the finite monoid $M$ separating $[v]$ and $[w]$, and the theorem is proved.
\end{proof}

We record the following immediate consequence:

\begin{cor}
A free product or a restricted direct product of  monoids is
residually finite if and only if each vertex monoid is residually finite. \qqed
\end{cor}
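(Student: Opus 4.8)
The plan is to observe that both constructions in the statement are instances of graph products, so that the corollary is an immediate specialisation of Theorem~\ref{thm:main}. Concretely, given a family $\mathcal{M}=\{M_\alpha:\alpha\in V\}$ of monoids, I would first pass to mutually disjoint isomorphic copies (as Definition~\ref{defn:graphprodmonoids} requires), which affects neither the isomorphism type of the free product or restricted direct product nor the residual finiteness of the individual $M_\alpha$. Then for the free product I would take $\Gamma=(V,E)$ with $E=\emptyset$, and for the restricted direct product I would take $\Gamma=(V,E)$ with $E=V\times V$; by the first two bullet points of Remark~\ref{rem:freeprod}, the graph products $\mathscr{GP}(\Gamma,\mathcal{M})$ associated with these two graphs are precisely the free product and the restricted direct product of $\mathcal{M}$ (respectively), up to isomorphism.

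Applying Theorem~\ref{thm:main} to each of these two graphs then gives that the corresponding construction is residually finite if and only if each vertex monoid $M_\alpha$ is residually finite, which is exactly the assertion. I do not expect any genuine obstacle here: all the work is already done in Theorem~\ref{thm:main}, and the only thing to supply is the identification of free products and restricted direct products as the two extreme special cases of the graph product construction, which is recorded in Remark~\ref{rem:freeprod}. Hence the proof is a one-line deduction, which is why the statement can be left with its proof omitted.
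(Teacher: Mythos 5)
Your proposal is correct and matches the paper's intended reasoning exactly: the corollary is recorded as an immediate consequence of Theorem~\ref{thm:main}, with the identification of free products ($E=\emptyset$) and restricted direct products ($E=V\times V$) as graph products supplied by Remark~\ref{rem:freeprod}. Nothing further is needed.
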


As  commented in the Introduction, the result for finite direct products follows from generic considerations.
The result for free products does not seem to have appeared in literature.
The result for residual finiteness of graph products of groups appears in \cite{Green:1990}; in contrast to the argument there, our proof for monoids does not
require any tools specific for groups.

We now briefly consider the  notion of a graph product of semigroups (see \cite{dandan:2023}). Here we start with a {\em semigroup presentation}, which is a quotient of the free {\em semigroup} $X^+=X^*\setminus\{ \epsilon\}$ on a set $X$.

 \begin{defn}\label{defn:graphprodsemigroups}  Let
$\Gamma=(V,E)$ be  graph and let $\mathcal{S}=\{S_\alpha: \alpha\in V\}$ be a set of mutually disjoint semigroups.
The {\em graph product} $\mathscr{GP}=\mathscr{GP}(\Gamma,\mathcal{S})$ of
 $\mathcal{S}$   with respect to $\Gamma$ is defined by the semigroup presentation
 \[\mathscr{GP}=\langle X\mid R  \rangle\]
 where $X=\bigcup_{\alpha\in V}S_\alpha$ and the relations in $R=  R_{\textsf{v}}\cup R_{\textsf{e}}$ are given as in Definition \ref{defn:graphprodmonoids}.
\end{defn}

\begin{cor}\label{cor:semigroups} Any graph product of  semigroups is residually finite if and only if each vertex semigroup is residually finite. 
\end{cor}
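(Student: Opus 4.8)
The plan is to deduce the semigroup statement from the monoid result (Theorem~\ref{thm:main}) by passing to monoid completions of the vertex semigroups. The \textsf{only if} direction will be immediate: each vertex semigroup $S_\alpha$ embeds into $\mathscr{GP}(\Gamma,\mathcal{S})$, exactly as in the monoid case (cf.\ \cite{dandan:2023}), and any subsemigroup of a residually finite semigroup is residually finite by restricting separating morphisms.

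For the \textsf{if} direction, assume every $S_\alpha$ is residually finite. For each $\alpha\in V$ I would form the monoid $T_\alpha=S_\alpha\cup\{1_\alpha\}$ obtained by adjoining a \emph{fresh} identity $1_\alpha\notin S_\alpha$ --- done even when $S_\alpha$ already possesses an identity, and with the $1_\alpha$ chosen so that the $T_\alpha$ stay mutually disjoint. The first step is to verify that each $T_\alpha$ is a residually finite monoid: two distinct elements of $S_\alpha$ are separated by a finite-image morphism $S_\alpha\to F$, which extends to a monoid morphism $T_\alpha\to F^1$; and $1_\alpha$ is separated from any $s\in S_\alpha$ by the morphism onto the two-element monoid that collapses $S_\alpha$ to $0$ and sends $1_\alpha$ to $1$. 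With this in hand, Theorem~\ref{thm:main} gives that the monoid graph product $\mathscr{GP}(\Gamma,\mathcal{T})$ of $\mathcal{T}=\{T_\alpha:\alpha\in V\}$ is residually finite.

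The second, and main, step is to realise $\mathscr{GP}(\Gamma,\mathcal{S})$ as a subsemigroup of $\mathscr{GP}(\Gamma,\mathcal{T})$. The inclusion $X:=\bigcup_\alpha S_\alpha\hookrightarrow\bigcup_\alpha T_\alpha$ induces a semigroup morphism $\iota\colon\mathscr{GP}(\Gamma,\mathcal{S})\to\mathscr{GP}(\Gamma,\mathcal{T})$, since the relations $R_{\textsf{v}}\cup R_{\textsf{e}}$ defining the former are among those defining the latter. Injectivity of $\iota$ is where the effort goes, and it is precisely here that the freshness of the $1_\alpha$ matters. A word over $X$, read inside $\mathscr{GP}(\Gamma,\mathcal{T})$, contains no letter from $I=\{1_\alpha:\alpha\in V\}$, and the same holds for every word shuffle equivalent to it; hence the only reductions applicable are $R_{\textsf{v}}$-reductions $x\circ y\mapsto xy$ with $x,y$ in a common $S_\gamma$, and these keep the word inside $X^+$ and non-empty. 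So a reduced form in $\mathscr{GP}(\Gamma,\mathcal{T})$ of a word $u\in X^+$ is again a word $u'\in X^+$ obtained from $u$ using only relations in $R_{\textsf{v}}\cup R_{\textsf{e}}$, whence $[u]=[u']$ already in $\mathscr{GP}(\Gamma,\mathcal{S})$. Taking such reduced forms $u',v'$ of words $u,v\in X^+$ with $[u]=[v]$ in $\mathscr{GP}(\Gamma,\mathcal{T})$, Theorem~\ref{thm:conf} makes $u'$ and $v'$ shuffle equivalent, so $[u']=[v']$ in $\mathscr{GP}(\Gamma,\mathcal{S})$, and therefore $[u]=[v]$ there as well; thus $\iota$ is injective.

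Finally, a residually finite monoid is residually finite as a semigroup and the property passes to subsemigroups, so the embedding of the previous paragraph shows that $\mathscr{GP}(\Gamma,\mathcal{S})$ is residually finite. I expect the injectivity of $\iota$ to be the only genuinely delicate point: one must check that the monoid reduction and normal-form machinery, when restricted to words containing none of the adjoined identities, coincides with the semigroup-level rewriting. This is exactly why one should adjoin genuinely new identities rather than reuse existing ones: otherwise the identities of two distinct vertex monoids would become equal in $\mathscr{GP}(\Gamma,\mathcal{T})$ while staying distinct in $\mathscr{GP}(\Gamma,\mathcal{S})$, and $\iota$ would fail to be injective.
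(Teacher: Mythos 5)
Your proof is correct and takes essentially the same route as the paper: adjoin fresh identities to get residually finite vertex monoids, embed $\mathscr{GP}(\Gamma,\mathcal{S})$ into the monoid graph product of the monoids $S_\alpha^{\underline{1}_\alpha}$, and apply Theorem~\ref{thm:main}, with the converse coming from the embeddings of the $S_\alpha$. The only difference is that the paper quotes \cite[Proposition 7.3]{dandan:2023} for the embedding of the semigroup graph product into the monoid one, whereas you prove it directly (and correctly) from the reduction/shuffle machinery of Theorem~\ref{thm:conf}, rightly noting that the adjoined identities must be genuinely new letters.
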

\begin{proof}  Consider a graph product $\mathscr{GP}=\mathscr{GP}(\Gamma,\mathcal{S})$ of semigroups
$\mathcal{S}=\{S_\alpha: \alpha\in V\}$. For each $\alpha\in V$ we may embed $S_{\alpha}$ into a monoid $S_{\alpha}^{\underline{1}_{\alpha}}$ where $\underline{1}_{\alpha}$ is an adjoined identity. From \cite[Proposition 7.3]{dandan:2023}, there is a semigroup embedding of
$\mathcal{GP}$ into $\mathcal{GP}'=\mathscr{GP}(\Gamma,\mathcal{M})$ where $\mathcal{M}=\{ S_{\alpha}^{\underline{1}_{\alpha}}:\alpha\in V\}$.

If each semigroup $S_{\alpha}$ is residually finite, then so is each monoid $S_{\alpha}^{\underline{1}_{\alpha}}$ and hence from Theorem~\ref{thm:main}, so is
$\mathcal{GP}'$. But, $\mathcal{GP}$ embeds into $\mathcal{GP}'$, so that $\mathcal{GP}$ is residually finite.

Conversely, if $\mathcal{GP}$ is residually finite, then as each $S_{\alpha}$ embeds into $\mathcal{GP}$, so is each $S_{\alpha}$.
\end{proof}

Again, we obtain the following immediate consequence:

\begin{cor}[Golubov \cite{golubov:1971}]
A free product of semigroups is residually finite if and only if all the vertex semigroups are residually finite.\qqed
\end{cor}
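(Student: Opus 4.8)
The plan is to deduce this from Corollary~\ref{cor:semigroups} by observing that a free product of semigroups is exactly a graph product over an edgeless graph. Concretely, given a set $\mathcal{S}=\{S_\alpha : \alpha\in V\}$ of mutually disjoint semigroups, I would take $\Gamma_0=(V,\emptyset)$ to be the graph on $V$ with no edges; then in Definition~\ref{defn:graphprodsemigroups} the set $R_{\textsf{e}}$ is empty, so $\mathscr{GP}(\Gamma_0,\mathcal{S})=\langle X\mid R_{\textsf{v}}\rangle$ with $X=\bigcup_{\alpha\in V}S_\alpha$, which is precisely the free product of the semigroups in $\mathcal{S}$. Corollary~\ref{cor:semigroups} applied to $\mathscr{GP}(\Gamma_0,\mathcal{S})$ then states that this monoid is residually finite if and only if each $S_\alpha$ is residually finite, which is the assertion.

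The only point requiring a word of justification --- and, such as it is, this is the ``hardest'' part --- is the identification of the presentation $\langle X\mid R_{\textsf{v}}\rangle$ with the free product; I would simply check that this presentation enjoys the universal property of the free product of the $S_\alpha$, exactly as in the monoid situation discussed in \cite{cliffordpreston:1967,howie:1995}. Beyond that there is nothing to prove, and I would expect the write-up to be a single sentence invoking Corollary~\ref{cor:semigroups}.

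Alternatively, if one wished to avoid appealing to the graph-product formalism altogether, I would mimic the proof of Corollary~\ref{cor:semigroups} directly: adjoin an identity to each $S_\alpha$ to obtain monoids $S_\alpha^{\underline{1}_\alpha}$, observe (the $E=\emptyset$ case of \cite[Proposition 7.3]{dandan:2023}) that the semigroup free product of the $S_\alpha$ embeds into the monoid free product of the $S_\alpha^{\underline{1}_\alpha}$, and then use the free-product corollary following Theorem~\ref{thm:main} --- whose `if' direction is Theorem~\ref{thm:main} for an edgeless graph and whose `only if' direction is the embedding of each factor into the free product --- to transfer residual finiteness in both directions. Either way the argument is immediate given the results already established.
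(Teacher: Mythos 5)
Your proposal is correct and matches the paper's (implicit) argument: the paper derives this corollary as an immediate consequence of Corollary~\ref{cor:semigroups} by specialising to the edgeless graph, exactly as in your first paragraph. The identification of $\langle X\mid R_{\textsf{v}}\rangle$ with the semigroup free product is indeed the only point needing a word, and your appeal to the universal property handles it.
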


We also remark that a finite direct product of semigroups is not realised by a graph product, and the notion of restricted direct product cannot be formulated, since there are no given identities. We observe, however, that it is shown in \cite{mayr:2018} that a finite direct product of semigroups is residually finite if and only if each constituent semigroup is residually finite.

\section*{Acknowledgement} 

The authors are grateful to Prof. Mikhail Volkov for alerting them to the existence of \cite{sapir:1982} and for providing them with references 
\cite{golubov:1971} and {\cite{sapir:1982}.

\end{document}